\newcommand{\TheTitle}{Error identities for variational problems with obstacles}
\title{{\TheTitle}}
\author{S. Repin}
\address{V.A.Steklov Institute of Mathematics in St.-Petersburg, 191011, Fontanka 27, Sankt--Petersburg, Russia 
 and University of Jyv\"askyl\"a, P.O.Box 35, FI-40014,  Finland}
\email{repin@pdmi.ras.ru}
\urladdr{http://www.pdmi.ras.ru/\string~repin}
\author{J. Valdman}
\address{Institute of Mathematics and Biomathematics, Faculty of Science, University of South Bohemia, Brani\v sovsk\' a 31, CZ--37005 and Institute of Information Theory and Automation, Academy of Sciences, Pod vod\'{a}renskou v\v{e}\v{z}\'{\i}~4, CZ--18208~Praha~8, Czech Republic}
\email{jvaldman@prf.jcu.cz}
\urladdr{https://sites.google.com/site/janvaldman/}
\newtheorem{remark}{Remark}
\newtheorem{theorem}{Theorem}
\newtheorem{lemma}{Lemma}
\newtheorem{corollary}{Corollary}
\def\dvg{{\rm div}}
\def\IntO{\int\limits_{\Omega}}
\def\Rd{\Bbb R^d}
\def\wh{\widehat}
\newcommand\be{\begin{eqnarray*}}
\newcommand\ee{\end{eqnarray*}}
\newcommand\ben{\begin{eqnarray}}
\newcommand\een{\end{eqnarray}}
\def\W{{\sf W}}
\def\vz{v^*}
\def\pz{p^*}
\def\yz{y^*}
\def\MM{{\mathds M}}
\def\cP{{\mathcal P}}
\def\benn{\begin{eqnarray*}}
\def\eenn{\end{eqnarray*}}
\def\BR{{\mathbb R}}
\newcommand\dx{\, dx}
\newcommand{\om}{-}
\newcommand{\op}{+}
\newcommand{\opm}{\pm}
\newcommand{\Oum}{\Omega ^u_\om}
\newcommand{\Oup}{\Omega ^u_\op}
\newcommand{\Oun}{\Omega ^u_0}
\newcommand{\Ovm}{\Omega ^v_\om}
\newcommand{\Ovp}{\Omega ^v_\op}
\newcommand{\Ovn}{\Omega ^v_0}
\newcommand{\operm}[1]{(#1)_\om}
\newcommand{\operp}[1]{(#1)_\op}
\newcommand{\alpham}{\alpha_\om}
\newcommand{\alphap}{\alpha_\op}
\begin{document}

% Sets running headers as well as PDF title and authors

\maketitle

% REQUIRED
\begin{abstract}
The paper is concerned with a class of  nonlinear
  free boundary problems, which are usually
  solved by variational methods based on primal (or primal--dual) variational settings. We deduce and investigate special relations
  (error identities). They show
  that a certain nonlinear measure of the distance to the
  exact solution
  (specific for each problem) is equivalent to the respective  duality gap, which minimization is a keystone of all variational numerical methods. Therefore,  the identity defines the measure that contains 
  maximal quantitative information on the quality
  of a numerical solution
  available through these methods. The measure has
  quadratic terms generated by the linear
  part of the differential operator and nonlinear terms associated with free boundaries. We obtain fully computable two sided bounds
  of this measure and show that they provide efficient estimates
  of the distance between the
  minimizer and any function from the corresponding energy space. Several examples show that for different minimization sequence the balance
  between different components of the overall error measure may be different
  and domination of nonlinear terms may indicate that coincidence sets
  are approximated incorrectly.

 \end{abstract}

% REQUIRED
%\begin{keywords}
%Variational problems with obstacles, coincidence set, convex functionals, error identities
%\end{keywords}

% REQUIRED
%\begin{AMS}
%35J20, 65K15, 35J86
%\end{AMS}

\section{Introduction}
Variational inequalities form an important class of nonlinear
models that describe free boundary phenomena arising in various
applied problems (see, e.g., G.~Duvaut and J.~L. Lions \cite{DuLi} and other publications cited therein). Usually free boundaries separate regions where  solutions possess quite different physical properties. Therefore, any reliable information on the shape and location of such a boundary is very important.
Qualitative properties of free boundaries are studied
by purely analytical (a priori) methods unlike quantitative
information, which in the vast
majority of cases can be obtained only by computational methods.
In this context, it is necessary to
know which quantitative information could be indeed extracted
from a numerical solution.

In this paper, we are concerned with two classes of variational
inequalities generated by obstacle type conditions.
Differentiability properties of exact solutions to these problems are, in general, restricted even if all external data of a problem are smooth (e.g., see the works of
H. Brezis \cite{Brezis}, L.A. Caffarelli \cite{Caff},
 D. Kinderlehrer and G. Stampacchia \cite{KS},
A. Friedman \cite{Friedman},
N. N. Uraltseva \cite{Uraltseva}).
In \cite{BS} it was proved that
there exists a unique solution  $u \in W^{2,2}(\Omega)$
of an obstacle problem
\be
\IntO\nabla u\cdot \nabla w dx\,\geq\,\IntO fw\,dx
\;\;\;\forall w\in
K:=\{w\in H^1(\Omega)\,\mid\,w=u_D\,on\,\partial\Omega,
\,\,w\geq\psi\}
\ee
if $\psi\in W^{2,2}(\Omega)$, $f\in L^2$, the function $u_D$ (which defines the Dirichl\'et boundary condition)
belongs to $W^{2,2}(\Omega)$ and satisfies the natural condition $u_D\geq \psi$
on $\partial\Omega$.

Many researches were focused
on clarifying mathematical properties of the coincidence set.
In particular, it was proved
that if the domain
$\Omega\subset R^2$ is strictly convex with a smooth boundary $\partial \Omega$
 and if the obstacle $\psi\in C^2(\Omega)$
is strictly concave, then the coincidence set is connected
and its boundary is smooth and homeomorphic to the unit circle (see, e.g. \cite{KS}).
However, in general, the structure of a coincidence set
can be very complicated and for any domain one can point
out such an obstacle that this set has any number
of disjoint subsets. 

%A special class of  problems
%arise when obstacles are given on sets of lower dimensions
%(see J.~Frehse \cite{}).

Numerical methods for problems with obstacles (and many other
problems related to variational inequalities) were
systematically studied in R. Glowinski, J.-L. Lions, and R. Tremolieres
\cite{GlLiTr,Glowinski}. 
Getting the respective a priori rate convergence estimates (in terms
of the  mesh size $h$) was the first
question studied by many authors.
In the context
of finite element approximations such type estimates were
derived by 
R.~S.~Falk
\cite{Falk}
who proved  the standard a priori convergence
error estimates (with the rate $h$ for the $L^2$ norm of gradients
and the rate $h^2$ for the $L^2$ norm of the functions) 
provided that $u\in W^{2,2}$.
Convergence of mixed methods for problems with
obstacles was established in
F.~Brezzi, W. W. Hager and P. A. Raviart
\cite{BHR} and numerical methods based on the augmented Lagrangian
approach were studied in T. K\"arkk\"ainen, K. Kunisch, and P. Tarvainen \cite{KKT}.

This paper is concerned with other important questions arising
in  quantitative
analysis of nonlinear problems. One of them is {\em which measure $\mathds M$
of the distance to the exact solution is adequate (natural)
for a particular problem?} (see a discussion in  \cite{Repin2012}).
Furthermore, we must know which
properties of a solution are controlled by $\mathds M$ and
 deduce  explicitly computable bounds  (minorants and majorants). 
 In the paper, we study these questions in the context
 of obstacle type problems.
Our analysis is based upon general type {\it error identities} derived in \cite{NeRe,Repin1999,Repin2000} for a wide class
of convex variational problems. These identities establish equivalence
of a certain nonlinear measure  $\mathds M$ 
and the duality gap between the primal and dual energy functionals. 
 Since variational methods are based
 on minimization of this gap, the measure $\mathds M$ shows limits of quantitative analysis for this class of methods. 

For convenience of the reader we shortly recall the main
items necessary for understanding of the material.
Consider the class of variational problems
\ben
\label{1.1}
\inf\limits_{v\in V}J(v),\qquad J(v)=G(\Lambda v)+F(v),
\een
where $\Lambda:Y^*\rightarrow \BR$ is a bounded linear operator,
 $ G:Y\rightarrow \BR$ is a convex, coercive, and lower semicontinuous functional,
 $F:V\rightarrow\BR$ is another convex lower semicontinuous functional, and
 $Y$ and $V$ are reflexive Banach spaces. The dual spaces are denoted
 by $Y^*$ and $V^*$, respectively, and the duality pairings are denoted by $(y^*,y)$ and $\left<v^*,v\right>$.
 The dual variational problem consists of finding $p^*\in Y^*$
maximizing the dual functional 
\ben
I^*(y^*):=-G^*(y^*)-F^*(-\Lambda^*y^*) \label{dual_functional}
\een
 over the space $Y^*$. Here $G^*:Y^*\rightarrow \BR$ and $F^*:V^*\rightarrow \BR$
are the Young-Fenchel transforms (convex conjugates) of $G$ and $F$, respectively. Henceforth we use are the so called {\em compound} functionals
\be
&D_F(v,\vz):=&F(v)+F^*(\vz)- \left< \vz,v \right>, \\
& D_G(y,\;\yz)\;:=&G(y)+G^*(\yz)-(\yz,y)
\ee
 generated by the convex functionals
$F$ and $G$, respectively. These functionals are nonnegative
and vanish if and only if $v$ and $v^*$ (resp. $y$ and $y^*$)
are joined by special differential relations (see, e.g.,
\cite{EkTe}). Notice that in the simplest case where $V$ is a Hilbert space
and $F(v)=\frac12\|v\|^2$, the functional $D_F(v,\vz)$ coincides with the norm
$\frac12\|v-\vz\|^2$. However, in general $D_F(v,\vz)$ should be viewed
as a nonlinear measure, which vanishes if and only if the pair $(v,\vz)$
satisfies certain conditions.

 Let $\yz\in Y^*$ and $v\in  V$ be the functions compared with $\pz$ and $u$. Introduce the following (nonlinear)
measure of the distance between $\{u,\pz\}$ and
$\{v,\yz\}$:
\ben 
\label{1.7}
\MM(\{u,\pz\},\{v,\yz\}):=
D_F(u,-\Lambda^*\yz)+D_F(v,-\Lambda^*\pz)+D_G(\Lambda u,\yz)+D_G(\Lambda v,\pz)\,\geq\,0.
\een 
It vanishes
if and only if
\be
\Lambda v\in\partial G^*(p^*),\;
y^*\in\partial G(\Lambda u),\;
-\Lambda^*y^*\in\partial F(u),\;
v\in\partial F^*(-\Lambda^* p^*).
\ee
The above conditions are satisfied if and only
if $v=u$ and $y^*=p^*$ (i.e., if approximations coincide with the exact primal and dual solutions).
In
\cite{Repin1999}
 and \cite{NeRe} (Section 7.2), it was proved that
\begin{equation}
\label{erroridentity1}
\MM(\{u,\pz\},\{v,\yz\}) = J(v)-I^*(y^*).
\end{equation}
Hence
$\MM\{(u,\pz),(v,\yz)\}=0$
if and only if $J(v)=I^*(y^*)$ (what means that
$v$ is a minimizer
of the problem $\cP$ and $\yz$ is a maximizer
of the problem $\cP^*$).  

%Error identity (\ref{erroridentity1}) has another equivalent form
%\begin{equation}
%%\Frame{
%\MM\{(u,\pz),(v,\yz)\}=
%D_F(v,-\Lambda^*y^*)+
%D_G(\Lambda v,\yz).
%%}
%\label{erroridentity2}
%\end{equation}
Two particular forms of (\ref{erroridentity1}) arise if we set $v=u$ or $\yz=\pz$. They are $\MM ( u, v ) := \MM (\{u,\pz\},\{v,\pz\})$ and $\MM (\pz, \yz) := \MM (\{u,\pz\},\{u,\yz\})$.
In view of (\ref{erroridentity1}),
\begin{eqnarray} 
\label{mainidentity}
&\MM ( u, v ) =&D_F(v,-\Lambda^*\pz)+D_G(\Lambda v,\pz)= J(v)-J(u),\\
\label{mainidentity_dual}
&\MM (\pz, \yz) =&D_F(u,-\Lambda^*\yz)+D_G(\Lambda u,\yz)= I^*(\pz)-I^*(\yz).
\end{eqnarray}
Numerical methods
are based either on minimization of the primal energy, or maximization of the dual energy, or on coupled minimization--maximization of both.  The identities
(\ref{erroridentity1}),
(\ref{mainidentity}), and
(\ref{mainidentity_dual}) show that
 the functional $\MM(\{u,\pz\},\{v,\yz\})$ 
(and its particular forms $\MM(u,v)$ and $\MM(\yz,\pz)$)
are in fact   the error
measures used by energy based numerical procedure designed to
 solve (\ref{1.1}). Since the error measures are equal to the respective duality
gaps, they present the strongest (and in a sense the most natural) measure
for the class of problems considered.

Below we study these identities for two classes of nonlinear variational problems
and show that they generated specific error measures containing two parts. The first part is presented
by a norm equivalent to  $H^1$ norm and the second one
is a nonlinear measure, which controls (in a rather weak sense) how accurately an approximate solution
recovers configuration of the free boundary.
We deduce directly computable quantities which majorate the
right hand sided of (\ref{erroridentity1}),
(\ref{mainidentity}), and
(\ref{mainidentity_dual}). Furthermore, 
we prove that the majorants  are sharp,
i.e., they do not contain an irremovable gap between the left
and right hand sides. The majorants possesses other important
properties, namely, they
need no a priori knowledge about the shape of a coincidence set,
 valid for any approximations
of the admissible functional (energy) set,
and  do not contain unknown (e.g., interpolation) constants. In the last section of the paper, we collect
computational results aimed to confirm theoretical analysis.
They are mainly focused on two points. First we show
that the measures correctly represent the quality of approximations for various minimizing sequences. 
Another observation is that
for different sequences different parts of the measure
may dominate, but their sum always correctly represent the error and can be efficiently estimated from above by the majorant.

%%%%%%%%%%%%%%%%%%%%%%%%%%%%%%%%%%%%%%%%%%%%
\section{Classical obstacle problem}
\subsection{Variational setting}
We begin with the classical obstacle problem (see, e.g. \cite{Brezis,Friedman,KS}), where admissible functions belong to the set 
\be
&&K:=\{v\in  V_0:=H^1_0(\Omega) \,\mid\,\phi(x)\,\leq v(x)\, \leq \psi(x)\;
{\rm a.e.\,in\,}\Omega \}.
\ee
Here, $H^1_0(\Omega) $ denotes the Sobolev space of functions vanishing
on $\partial\Omega$ (hence we consider the case $u_D=0$),
 $\Omega\subset\mathbb{R}^d$ ($d \in \{1, 2, 3\}$) is a bounded domain with a Lipschitz continuous boundary $\partial\Omega$ and 
$\phi,\psi\in H^2(\Omega )$ are two given functions (lower and upper obstacles) such that
\be
&&\phi(x)\leq 0\;{\rm on}\;\partial\Omega,\quad
\psi(x)\geq 0\;{\rm on}\;\partial\Omega,
\quad\phi(x)\leq\psi(x),\quad\forall x\in \Omega.
\ee
The problem is to
find $u\in K$ satisfying the variational inequality
\ben
&&a(u,w-u)\geq\,
 ( f,w-u ) \quad \forall w \in K
\label{classical_obstacle}
\een
for a given function $f\in L^{2}(\Omega)$ and a bilinear form
 $$a(u,w):= \IntO A\nabla u\cdot\nabla w \dx.$$ 
It is assumed that $A $ is a 
symmetric matrix subject to the condition
\ben \label{A_assumptions}
A(x) \xi\cdot\xi\geq c_1 \, |\xi|^2\qquad c_1>0,\qquad \forall \xi \in
\mathbb{R}^{d}
\een
almost everywhere in $\Omega $. Under the assumptions made, the
unique solution $u \in K$ exists.
In general, the solution $u$ divides $\Omega $ into three sets:
\ben
&\Oum:=&\{x\in\Omega \,\mid\, u(x)=\phi(x) \}\,, \nonumber  \\
&\Oup:=&\{x\in\Omega \,\mid\, u(x)=\psi(x) \}\,, \label{coincidence_sets}    \\
&\Oun:=&\{x\in\Omega\,\mid\,\phi(x)<u(x)<\psi(x)\}\,. \nonumber 
\een
The sets  $\Oum$ and
$\Oup$  are the {\em lower} and
{\em upper coincidence sets}
and  $\Oun$
is an open set, where $u$ satisfies the Poisson equation 
$\dvg  (A \nabla u) + f=0$. Thus, the problem involves {\em free boundaries}, which are unknown a priori. Let $v$ be an approximation of $u$. It defines approximate  sets
\ben 
&\Ovm:=&\{x\in\Omega \,\mid\, v(x)=\phi(x)\}\,,  \nonumber \\
&\Ovp:=&\{x\in\Omega \,\mid\, v(x)=\psi(x)\}\,, \label{coincidence_sets_approximate}\\
&\Ovn:=&\{x\in\Omega\,\mid\,\phi(x)<v(x)<\psi(x)\} \nonumber \,.
\een
Notice that unlike the sets in (\ref{coincidence_sets}), the sets 
(\ref{coincidence_sets_approximate}) are known.

Solution of the problem (\ref{classical_obstacle}) can be represented in a mixed form, i.e., as a pair $(u, \pz)$, where the flux 
\ben \pz=A\nabla u \label{exact_flux}
\een
satisfies the conditions 
\ben
&\dvg \pz+f\leq 0\quad &{\rm on}\;\Oum, \nonumber \\
&\dvg \pz+f\geq 0\quad &{\rm on}\;\Oup, 
\label{equilibrium_of_p*}   \\ 
&\dvg \pz+f=0\quad &{\rm on}\;\Oun. \nonumber 
\een
The pair $(u, \pz)\in K\times L^2(\Omega,\Rd)$ is a saddle point of the respective minimax formulation. Under the above made
assumptions it exists. Moreover, $\pz$ has square summable divergence and satisfies the
relations \eqref{exact_flux} and \eqref{equilibrium_of_p*} almost everywhere in $\Omega$.

%The above form has a useful mechanical interpretation: a continuum is subjected to external volume forces $f$, an equilibrium of forces is satisfied on $\Oun$ and there are additive  reactive forces on $\Oum$ and $\Oup$ from obstacles not allowing the continuum to penetrate them. 

\subsection{Error measures}
The variational inequality \eqref{classical_obstacle} is known to have the equivalent form \eqref{1.1} for
\be
&\Lambda v=\nabla v,\qquad\qquad
&\Lambda^*\yz=-\dvg \yz,\\
&G(\Lambda v)=\frac12 \IntO  A\nabla v\cdot\nabla v \dx,\;\qquad\qquad
&F(v)=-\IntO fv\,dx +\chi_K(v),
\ee
where
$\chi_K$ is the characteristic functional of the set $K$, i.e.,
\be
\chi_K(v):=\left\{
\begin{array}{cc}
0\;&{\rm if}\;\phi\leq v\leq \psi,\\
+\infty& {\rm else}.
\end{array}
\right.
\ee
In this case, $V=V_0$, $V^*=H^{-1}(\Omega)$, 
$Y=L^2(\Omega,\Rd)$
\ben
G^*(\yz)&=& \frac12 \IntO A^{-1} \yz \cdot \yz \dx,   \label{G_dual}
\een
and
\ben
D_G(\Lambda v,\yz)&=&\frac12 \IntO (A\nabla v - \yz) \cdot (\nabla v  - A^{-1} \yz) \dx.  \label{DG}
\een
For $\yz=\pz$ and for $v=u$, we obtain
\ben
& D_G(\Lambda v,\pz)=\frac12\IntO A\nabla (u-v)\cdot\nabla (u-v) \dx =: \frac12 || \nabla (u - v) ||_A^2,  
\label{compound_G} \\
& D_G(\Lambda u,\yz)=\frac12\IntO A^{-1}(\pz - \yz) \cdot (\pz - \yz) \dx =: \frac12 ||  \pz - \yz ||_{A^{-1}}^2.  \label{compound_G_dual}
\een 
%\begin{remark}[weighted norms]
%Notation of $L^2 -$ weighted norms, eg. 
%$$|| \nabla v ||_{A}^2 := \IntO  A\nabla v\cdot\nabla v \dx, \qquad  || \yz ||_{A^{-1}}^2 := \IntO A^{-1} \yz \cdot \yz \dx$$
%is frequently used and the combination of $A-$ and $A^{1}-$ - norms allows ambiguous forms, eg. 
%$$ D_G(\Lambda v,\yz) = \frac12 || \nabla v - A^{-1} \yz ||_{A}^2 = \frac12 || A \nabla v - \yz ||_{A^{-1}}^2.$$
%\end{remark}
Next, for $v^*\in L^2(\Omega)$, 
\ben
F^*(v^*)=\sup\limits_{v\in K}\IntO v (v^*+f) \dx = \sup\limits_{v\in K}\IntO  ( - v (v^*+f)_\om + v (v^*+f)_\op  ) \dx 
= \IntO (-\phi(v^*+f)_\om+\psi(v^*+f)_\op) \dx. 
\label{F_dual}
\een
Here, $(z)_\om$ and $(z)_\op$ denote the negative and positive parts of the quantity $z$, i.e.,
$(z)_\om :=-\min\{0, z\}, (z)_\op :=\max\{0, z \}$. They
satisfy the relations
$z=  -  (z)_\om  + (z)_\op$ and $|z| =  (z)_\om + (z)_\op$. 

In view of (\ref{F_dual}), we deduce explicit form
of the functional $D_F$ provided that
$y^*\in Y^*_{\rm div}(\Omega):=\left\{y^*\in Y^*\,\mid\,
\dvg y^*\in L^2(\Omega)\right\}$:
\begin{multline}
D_F(v,-\Lambda^*\yz)= F(v)+F^*(-\Lambda^*\yz) + \left< \Lambda^*\yz,v \right>= \\
=\IntO(-fv -\phi \, \operm{ \dvg \yz+f } + \psi \, \operp{ \dvg \yz+f }  - \dvg \yz v) \dx =\\
=\IntO((v-\phi) \, \operm{ \dvg \yz+f } + (\psi - v) \, \operp{ \dvg \yz+f} ) \dx.
\end{multline}
Since $\pz$ belongs to $Y^*_{\rm div}(\Omega)$ 
and satisfies the relation \eqref{equilibrium_of_p*},
we find that
\begin{multline}\label{measure_obstacle}
D_F(v,-\Lambda^*\pz) =  -\int\limits_{\Oum}(v-\phi)(\dvg \pz+f) \dx+ \int\limits_{\Oup}(\psi-v)(\dvg \pz +f) \dx =  \\
= -\int\limits_{\Oum}(v-\phi)(\dvg A\nabla \phi+f) \dx+  \int\limits_{\Oup}(\psi-v)(\dvg A\nabla \psi+f) \dx.
\end{multline}
This quantity can be viewed as a certain  measure
\ben
\mu_{\phi\psi}(v) :=\!\int\limits_{\Oum}\!\! \W_\phi (v-\phi)\dx+  \int\limits_{\Oup}\!\! \W_\psi(\psi-v)\dx,% =\!\!\!\! 
%\int\limits_{\Oum \setminus \Ovm }\!\!\!\! \W_\phi (v-\phi)\dx+  \!\!\!\!\int\limits_{\Oup \setminus \Ovp }\!\!\!\! \W_\psi(\psi-v)\dx,
\label{measure_primal_classical}
\een
where
$
\W_\phi:=-(\dvg A\nabla \phi+f)$, 
$\W_\psi:=\dvg A\nabla \psi+f$
are two nonnegative %(due to  \eqref{equilibrium_of_p*}) 
weight functions generated by the source term $f$, the obstacles $\psi, \phi$ and the diffusion $A$. 
It is clear that $\mu_{\phi\psi}(v)=0$ if $\Ovm\subset \Oum $ and $\Ovp\subset \Oup $. In other words,
if all points of approximate sets $\Ovm$ and $\Ovp$ indeed
belong to the coincidence sets, then the measure is zero.
\begin{remark} \label{exampleI}
Assume that  $A={\mathbb I}$ (the identity matrix), obstacles $\phi, \psi$ are harmonic functions ($\triangle \phi=\triangle \psi = 0$ in $\Omega$) satisfying $\phi < 0 < \psi$ almost everywhere in $\Omega$ and $f=const \not = 0$. If $f >0$ then $\Oum=\emptyset$ (the lower obstacle $\phi$ is never active) and
\ben \label{measure_example_1}
\mu_{\phi\psi}(v)=f
 \int\limits_{\Oup} (\psi-v)\dx=f\|\psi-v\|_{L^1(\Oup)} = f\|\psi-v\|_{L^1(\Oup \setminus \Ovp)}.
\een
Here, we decomposed $$\Oup = (\Oup \setminus \Ovp) \cup (\Oup \cap \Ovp) $$ and applied the equality
$\|\psi-v\|_{L^1(\Oup)}=\|\psi-v\|_{L^1(\Oup \setminus \Ovp)}$ (which holds because $\|\psi-v\|_{L^1(\Oup \cap \Ovp)}={0}$).
Analogously, if $f <0$ then $\Oup=\emptyset$ (the upper obstacle $\psi$ is neven active) and
\ben \label{measure_example_2}
\mu_{\phi\psi}(v)=-f
 \int\limits_{\Oum} (v-\phi)\dx=-f\|v-\phi\|_{L^1(\Oum)} = -f\|v-\phi\|_{L^1(\Oum \cap \Ovm)}.
\een
We see that $\mu_{\phi\psi}(v)$ represents a certain measure, which
controls (in a weak integral sense) whether or not the function $v$
coincides with obstacles $\psi, \phi$ on true coincidence sets $\Oum$
and $\Oup$. 
\end{remark}

Analogously, the quantity
\ben
\label{measure_obstacle_dual}
&& D_F(u,-\Lambda^*\yz) = - \int\limits_{\Omega^{\yz}_\om}(u-\phi)(\dvg \yz+f) \dx+ \int\limits_{\Omega^{\yz}_\op}(\psi-u)(\dvg \yz +f) \dx 
\een
forms another measure
\ben
\mu_{\phi\psi}^{*}(\yz) := -\int\limits_{\Omega^{\yz}_\om }\!\!(u-\phi)(\dvg \yz+f) \dx+ \int\limits_{\Omega^{\yz}_\op }\!\!(\psi-u)(\dvg \yz +f) \dx,
\label{measure_dual_classical}
\een
where the sets
\ben
&\Omega ^{\yz}_\om:=\{x\in\Omega \,\mid\, \dvg \yz +f  < 0\}\,, \nonumber  \\
&\Omega ^{\yz}_\op:=\{x\in\Omega \,\mid\, \dvg \yz +f  > 0 \}\,, \label{coincidence_sets_approximate_flux}    \\
&\Omega ^{\yz}_0:=\{x\in\Omega\,\mid\,\dvg \yz +f  = 0 \}\,\nonumber 
\een
are approximations of $\Omega_\om$, $\Omega_0$, and $\Oum, \Oup, \Oun$ generated on the basis of dual solution $\yz$.
It is clear that this measure is zero if $\Omega^{\yz}_\om \subset \Oum $ and $\Omega^{\yz}_\op \subset \Oup $.
Hence, the measure $\mu_{\phi\psi}^{*}(\yz)$ is positive
if the sets $\Omega^{\yz}_\om$ and  $\Omega^{\yz}_\op$
contain parts which do not belong to true coincidence sets. We summarize
properties of $\mu_{\phi\psi}(v)$ and $\mu_{\phi\psi}^{*}(\yz)$ as follows:
\ben \label{zero_measure_case}
&\Oum \subset \Ovm \;{\rm and}\; \Oup \subset \Ovp \quad \Rightarrow \quad &\mu_{\phi\psi}(v)=0,\\
 \label{zero_dual_measure_case}
&\Omega^{\yz}_\om\subset \Oum, \;{\rm and}\; \Omega^{\yz}_\op\subset \Oup \quad \Rightarrow \quad& \mu_{\phi\psi}^{*}(\yz)=0.
\een

Now we use \eqref{erroridentity1}, \eqref{mainidentity}, and  \eqref{mainidentity_dual}  and deduce
error identities for the obstacle problem.

\begin{theorem}[energy identities for the classical obstacle problem] \label{theorem1}
Let $v$ and $\yz$ be approximations of $u$ and $\pz$, respectively. Then, 
\ben
\label{id:primal}
&\MM (u,v)=&\frac12 || \nabla (u - v) ||_A^2 + \mu_{\phi\psi}(v) = J(v) - J(u), \label{MM_final_obstacle} \\
&\MM (\pz,\yz)=&\frac12 ||  \pz - \yz ||_{A^{-1}}^2 +  \mu_{\phi\psi}^{*}(\yz) = I^*(\pz) - I^*(\yz). 
\label{id:dual}
\een 
\end{theorem}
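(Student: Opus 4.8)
The plan is to derive both identities in Theorem~\ref{theorem1} directly from the abstract error identities \eqref{mainidentity} and \eqref{mainidentity_dual}, which I may assume are already established for the general convex setting \eqref{1.1}. The work is purely bookkeeping: I must substitute the concrete choices of $\Lambda$, $\Lambda^*$, $G$, $F$ (and their conjugates) that were identified for the obstacle problem, and recognize that the resulting compound functionals $D_F$ and $D_G$ coincide with the quadratic energy norm plus the nonlinear measures $\mu_{\phi\psi}$, $\mu_{\phi\psi}^*$.

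For the primal identity \eqref{id:primal}, I would start from \eqref{mainidentity}, namely $\MM(u,v)=D_F(v,-\Lambda^*\pz)+D_G(\Lambda v,\pz)=J(v)-J(u)$. The term $D_G(\Lambda v,\pz)$ is handled by \eqref{DG}: setting $\yz=\pz$ and using the exact relation $\pz=A\nabla u$ from \eqref{exact_flux}, formula \eqref{compound_G} gives $D_G(\Lambda v,\pz)=\frac12\|\nabla(u-v)\|_A^2$. For the term $D_F(v,-\Lambda^*\pz)$, I would invoke the computation preceding \eqref{measure_obstacle}, where $D_F(v,-\Lambda^*\yz)$ was written explicitly for $\yz\in Y^*_{\rm div}(\Omega)$, and then specialize to $\yz=\pz$. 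The key point is that $\pz\in Y^*_{\rm div}(\Omega)$ with square-summable divergence, and the equilibrium relations \eqref{equilibrium_of_p*} say that $\dvg\pz+f$ is non-positive on $\Oum$, non-negative on $\Oup$, and zero on $\Oun$; hence the positive part $(\dvg\pz+f)_\op$ is supported in $\Oup$ and the negative part $(\dvg\pz+f)_\om$ in $\Oum$. This collapses the integral over $\Omega$ to the two integrals in \eqref{measure_obstacle}, which is exactly $\mu_{\phi\psi}(v)$ as defined in \eqref{measure_primal_classical} (after the further rewriting $\dvg\pz=\dvg A\nabla\phi$ on $\Oum$ and $\dvg\pz=\dvg A\nabla\psi$ on $\Oup$, valid since $u=\phi$, resp. $u=\psi$, on those sets). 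Adding the two pieces and using \eqref{mainidentity} yields \eqref{id:primal}.

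The dual identity \eqref{id:dual} is entirely symmetric. Starting from \eqref{mainidentity_dual}, $\MM(\pz,\yz)=D_F(u,-\Lambda^*\yz)+D_G(\Lambda u,\yz)=I^*(\pz)-I^*(\yz)$, I use \eqref{compound_G_dual} to identify $D_G(\Lambda u,\yz)=\frac12\|\pz-\yz\|_{A^{-1}}^2$, and I use \eqref{measure_obstacle_dual} to identify $D_F(u,-\Lambda^*\yz)$ with $\mu_{\phi\psi}^*(\yz)$ from \eqref{measure_dual_classical}; here the exact solution $u$ plays the role that $v$ played before, the sets $\Omega^{\yz}_\pm$ from \eqref{coincidence_sets_approximate_flux} replace $\Omega^u_\pm$, and no equilibrium relation is needed because those sets are defined precisely by the sign of $\dvg\yz+f$, so again the positive and negative parts are automatically supported on the right sets. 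Summing gives \eqref{id:dual}.

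There is no real obstacle here; the only point that requires a moment of care is the justification that one may restrict the domain integrals using \eqref{equilibrium_of_p*} --- i.e. that $\pz$ has enough regularity ($\dvg\pz\in L^2(\Omega)$) for the pointwise-a.e.\ sign conditions to make the reduction of $\int_\Omega$ to $\int_{\Oum}+\int_{\Oup}$ legitimate. That is exactly what the last sentence of Section~2.1 asserts, so I would simply cite it. Everything else is substitution of already-derived formulas, and the claimed equalities with $J(v)-J(u)$ and $I^*(\pz)-I^*(\yz)$ are nothing but \eqref{mainidentity} and \eqref{mainidentity_dual} transcribed into the present notation.
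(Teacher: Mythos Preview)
Your proposal is correct and matches the paper's approach exactly: the paper does not supply a separate proof block for Theorem~\ref{theorem1} but presents it as an immediate consequence of the abstract identities \eqref{mainidentity}--\eqref{mainidentity_dual} once the compound functionals $D_G$ and $D_F$ have been computed in \eqref{compound_G}, \eqref{compound_G_dual}, \eqref{measure_primal_classical}, and \eqref{measure_dual_classical}. Your write-up simply makes those substitutions explicit, including the regularity remark on $\pz$ that the paper states at the end of Section~2.1.
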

Theorem \ref{theorem1} establishes exact error identities for the classical obstacle problem in terms the primal and dual posings. 
In view of the relation between  the primal
and dual functionals, the identities (\ref{id:primal})
and (\ref{id:dual}) yield
\ben
&&\MM(\{u,\pz\},\{v,\yz\})= \MM (u,v) +  \MM (\pz,\yz) = J(v) - I^*(\yz). 
\label{id:primaldual} 
\een
This error identity holds for the mixed nonlinear measure $\MM(\{u,\pz\},\{v,\yz\})$ (which decomposes additively to two primal nonlinear measures). It shows that the duality gap
consists of four nonnegative quantities. Two of them are quadratic terms associated with energy errors. Two others
are nonlinear measures $\mu_{\phi\psi}(v)$ and $\mu_{\phi\psi}^{*}(\yz)$ defined by \eqref{measure_primal_classical} and \eqref{measure_dual_classical} 
%\eqref{measure_obstacle} and \eqref{measure_obstacle_dual}.  
Without taking them into account, only inequalities 
$$\frac12 || \nabla (u - v) ||_A^2 \leq J(v) - J(u), \qquad \frac12 ||  \pz - \yz ||_{A^{-1}}^2 \leq I^*(\pz) - I^*(\yz) $$ can be obtained. 
%%%%%%%%%%%%%%%%%%%%%%%
\subsection{Computable bounds of error measures}
First we show that the measure
 $\MM(\{u,\pz\},\{v,\yz\})$ can be directly computed for any pair of approximate
 solutions $\{v,\yz\}$ provided that $\yz$ possesses an additional regularity. 

\begin{theorem}
%[computable form of the error measure $\MM(\{u,\pz\},\{v,\yz\})$ for the classical obstacle problem]
 \label{theorem2}
Let $\{v,\yz\}\in K\times Y^*_{\dvg}(\Omega)$. Then, 
\ben \label{error_measure_obstacle_computable}
\MM(\{u,\pz\},\{v,\yz\})= \frac12 ||A \nabla v - \yz ||_{A^{-1}}^2 +  \Upsilon(v,\yz),
\een
where
\ben  \label{majorant_nonlinear_classical}
\Upsilon(v,\yz) := \int\limits_{\Omega^{\yz}_\om \setminus \Omega^{v}_\om} (\phi-v) (\dvg \yz+f) \dx
+ \int\limits_{\Omega^{\yz}_\op \setminus \Omega^{v}_\op } (\psi - v) (\dvg \yz+f) \dx. 
\een
\begin{proof}
In view of  \eqref{dual_functional}), \eqref{G_dual}, and \eqref{F_dual}, we have
\ben
J(v) &=& \frac12 || \nabla v ||_A^2 - \IntO fv \dx, \\
I^*(\yz) &=& - \frac12 || \yz ||_{A^{-1}}^2 + \IntO ( \phi \, \operm{\dvg \yz +f} - \psi \, \operp{\dvg \yz +f } ) \dx.
\een
According to \eqref{id:primaldual},
\begin{multline} \label{error_measure_obstacle_computable_intermediate}
\MM(\{u,\pz\},\{v,\yz\})= %= J(v) - I^*(\yz) = 
%\\ = G(\Lambda v)+F(v) + G^*(\yz) + F^*(-\Lambda^*\yz) = 
\frac12 ||\nabla v||_A^2 - \IntO fv \dx + \frac12 ||\yz ||_{A^{-1}}^2 + \\ 
+ \IntO ( -\phi \, \operm{ \dvg \yz+f } + \psi \, \operp{ \dvg \yz+f }   )\dx.
\end{multline}
Since
\begin{multline*}
 \frac12 ||\nabla v||_A^2  + \frac12 ||\yz ||_{A^{-1}}^2 - \IntO fv \dx 
= \frac12 ||A \nabla v - \yz ||_{A^{-1}}^2 + \IntO (\yz \cdot \nabla v - fv)\dx = \\
= \frac12 ||A \nabla v - \yz ||_{A^{-1}}^2 - \IntO (\dvg \yz + f) v \dx
\end{multline*}
and 
\begin{multline*}
- \IntO (\dvg \yz + f) v \dx 
+ \IntO ( -\phi \, \operm{ \dvg \yz+f }  + \psi \, \operp{ \dvg \yz+f } ) \dx = \\
= - \int\limits_{ \Omega^{\yz}_\om} (v - \phi) (\dvg \yz+f) \dx + \int\limits_{\Omega^{\yz}_\op} (\psi - v) (\dvg \yz+f) \dx  = \\
= - \int\limits_{ \Omega^{\yz}_\om \setminus \Ovm} (v - \phi) (\dvg \yz+f) \dx + \int\limits_{\Omega^{\yz}_\op \setminus \Ovp } (\psi - v) (\dvg \yz+f) \dx 
\end{multline*}
the substitution of last two equalities in \eqref{error_measure_obstacle_computable_intermediate} yields
 \eqref{error_measure_obstacle_computable}.
\end{proof}
\end{theorem}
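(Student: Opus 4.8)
The plan is to start from the mixed error identity \eqref{id:primaldual}, which asserts $\MM(\{u,\pz\},\{v,\yz\}) = J(v) - I^*(\yz)$, and to make its right-hand side fully explicit. First I would substitute the concrete forms of $G^*$ and $F^*$ recorded in \eqref{G_dual} and \eqref{F_dual}, obtaining $J(v) = \frac12 \|\nabla v\|_A^2 - \IntO fv \dx$ and $I^*(\yz) = -\frac12 \|\yz\|_{A^{-1}}^2 + \IntO \big(\phi\,\operm{\dvg \yz + f} - \psi\,\operp{\dvg \yz + f}\big)\dx$. Subtracting, $\MM(\{u,\pz\},\{v,\yz\})$ becomes the sum of the quadratic part $\frac12\|\nabla v\|_A^2 + \frac12\|\yz\|_{A^{-1}}^2$, the linear term $-\IntO fv \dx$, and the term $\IntO\big(-\phi\,\operm{\dvg \yz + f} + \psi\,\operp{\dvg \yz + f}\big)\dx$.

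The next step is purely algebraic: completing the square in the quadratic part, $\frac12\|\nabla v\|_A^2 + \frac12\|\yz\|_{A^{-1}}^2 = \frac12\|A\nabla v - \yz\|_{A^{-1}}^2 + \IntO \yz\cdot\nabla v \dx$, which is legitimate because $A$ is symmetric and positive definite by \eqref{A_assumptions}. The remainder $\frac12\|A\nabla v - \yz\|_{A^{-1}}^2$ is precisely the first term claimed in \eqref{error_measure_obstacle_computable}, so it remains only to identify the leftover $\IntO \yz\cdot\nabla v \dx - \IntO fv \dx + \IntO\big(-\phi\,\operm{\dvg \yz + f} + \psi\,\operp{\dvg \yz + f}\big)\dx$ with $\Upsilon(v,\yz)$.

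For that I would integrate by parts. Since $v \in K \subset H^1_0(\Omega)$ and $\yz \in Y^*_{\dvg}(\Omega)$, the Green formula for $H(\Omega,\dvg)$ gives $\IntO \yz\cdot\nabla v \dx = -\IntO (\dvg \yz)\, v \dx$, so the leftover equals $-\IntO (\dvg \yz + f) v \dx + \IntO\big(-\phi\,\operm{\dvg \yz + f} + \psi\,\operp{\dvg \yz + f}\big)\dx$. Splitting $\Omega$ according to the sign of $\dvg \yz + f$ into $\Omega^{\yz}_\om$, $\Omega^{\yz}_\op$, $\Omega^{\yz}_0$ as in \eqref{coincidence_sets_approximate_flux}, one checks that on $\Omega^{\yz}_\om$ the integrand collapses to $-(v-\phi)(\dvg \yz + f)$ (here $\operm{\dvg \yz + f} = -(\dvg \yz + f)$ and $\operp{\dvg \yz + f} = 0$), on $\Omega^{\yz}_\op$ it collapses to $(\psi-v)(\dvg \yz + f)$, and on $\Omega^{\yz}_0$ it vanishes; hence the leftover is $-\int_{\Omega^{\yz}_\om}(v-\phi)(\dvg \yz + f)\dx + \int_{\Omega^{\yz}_\op}(\psi-v)(\dvg \yz + f)\dx$. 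Finally I would shrink the integration domains using the known sets $\Ovm,\Ovp$ of \eqref{coincidence_sets_approximate}: since $v = \phi$ a.e.\ on $\Omega^{\yz}_\om \cap \Ovm$, the integrand vanishes there, so the integral over $\Omega^{\yz}_\om$ reduces to one over $\Omega^{\yz}_\om \setminus \Ovm$, and likewise the integral over $\Omega^{\yz}_\op$ reduces to one over $\Omega^{\yz}_\op \setminus \Ovp$; rewriting $-(v-\phi) = (\phi - v)$ then gives exactly $\Upsilon(v,\yz)$ from \eqref{majorant_nonlinear_classical}, which completes the argument. I expect the only genuinely delicate point to be justifying the integration by parts at the low regularity $\yz \in Y^*_{\dvg}(\Omega)$ — handled by the standard density argument, i.e.\ the Green formula pairing $H(\Omega,\dvg)$ with $H^1_0(\Omega)$ — together with keeping the signs straight on the three subsets of $\Omega$; everything else is bookkeeping.
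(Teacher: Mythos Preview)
Your proposal is correct and follows essentially the same route as the paper's own proof: start from the duality-gap identity \eqref{id:primaldual}, insert the explicit forms of $J(v)$ and $I^*(\yz)$, complete the square to isolate $\frac12\|A\nabla v-\yz\|_{A^{-1}}^2$, integrate by parts using $v\in H^1_0(\Omega)$ and $\yz\in Y^*_{\dvg}(\Omega)$, then split the residual integral over $\Omega^{\yz}_\om,\Omega^{\yz}_\op,\Omega^{\yz}_0$ and drop the pieces on $\Ovm,\Ovp$ where $v$ equals the obstacle. The only addition you make is the explicit remark on the validity of Green's formula at this regularity, which the paper leaves implicit.
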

\begin{remark}
Assume that the right hand side of (\ref{error_measure_obstacle_computable})
is equal to zero. Then $\yz=A\nabla v$ and
\be
v=\phi \qquad {\rm if}\;\dvg\yz+f<0,\\
v=\psi \qquad {\rm if}\;\dvg\yz+f>0.
\ee
Hence, $\Omega^{\yz}_\om\subset\Ovm$ and $\Omega^{\yz}_\op\subset\Ovp$. The sets $\Ovp$ and $\Ovm$ do not intersect as well as the sets
$\Omega^{\yz}_\op$ and $\Omega^{\yz}_\om$. Therefore, the set $\Omega^v_0=\Omega\setminus(\Ovp\cup\Ovm)$ is contained in the set
$\Omega^{\yz}_0=\Omega\setminus(\Omega^{\yz}_\op\cup \Omega^{\yz}_\om)$.
Thus, $\dvg\yz+f=0$ in $\Omega^v_0$.
For any $w\in K$, we have
\be
\IntO A\nabla v\cdot\nabla (w-v) \dx-\IntO f(w-v) \dx=
\int\limits_{\Ovm}(\dvg \yz+f)(\phi-w) \dx\\
+\int\limits_{\Ovp}(\dvg \yz+f)(\psi-w) \dx+\int\limits_{\Omega^v_0}(\dvg \yz+f)(v-w) \dx.
\ee
The right hand side of the above relation is nonnegative.
Indeed, the first two integrals are nonnegative and the last one is equal to zero.
This means that $v$ satisfies the variational inequality
and, consequently, the pair $\{v,\yz\}$ coincides with $\{u,\pz\}$.
\end{remark}
\begin{remark}\label{remark_error_measure_obstacle_computable_simplified}                   
If approximations of the coincidence sets (constructed on the basis
of $v$ and $\yz$) satisfy the relations
$ \Omega^{\yz}_\om\subset \Ovm$ and $\Omega^{\yz}_\op\subset \Ovp $,
then (\ref{error_measure_obstacle_computable}) reads
\ben
\label{error_measure_obstacle_computable_simple}
 \MM(\{u,\pz\},\{v,\yz\}) =\frac12 ||A \nabla v - \yz ||_{A^{-1}}^2. 
 \een
  %=\frac12 ||\nabla v - A^{-1}\yz ||_{A}^2 $$
Moreover, if
$\Omega^{\yz}_\om \subset \Oum \subset \Ovm$ and  $\Omega^{\yz}_\op\subset \Oup\subset \Ovp,$
then nonlinear terms of $\MM(\{u,\pz\},\{v,\yz\})$ vanish and we arrive at the equality
\be
|| \nabla (u - v) ||_A^2 + ||  \pz - \yz ||_{A^{-1}}^2  = ||A \nabla v - \yz ||_{A^{-1}}^2.
\ee
However, the sets $\Oum$ and $\Oup$ are unknown, so that in practice it is impossible to verify  the conditions that yield this simplest (hypercircle type) form of the error identity.
%Proposition \eqref{proposition_prager_synge} represents a form of the Prager-Synge identity for the classical obstacle problem. 
\end{remark}

%\subsubsection{Computable bounds of $\MM (u,v)$} %and $\MM (\pz,\yz)$
Theorem \ref{theorem2} provides a way to compute $ \MM(\{u,\pz\},\{v,\yz\})$,
which is the sum of
error measures $\MM (u,v)$ and $\MM (\pz,\yz)$. These measures separately
evaluate deviations of $v$ from $u$ and $\yz$ from $\pz$. It is desirable to have
guaranteed bounds for them as well (notice that in view of (\ref{error_measure_obstacle_computable}) two sided bounds of $\MM (u,v)$
imply two sided bounds of $\MM (\pz,\yz)$ and vise versa). For this purpose, we require knowledge of the exact energy $J(u)$ (or $I^*(\pz)$), which is generally
unknown. However, their is a way to derive computable bounds of  $\MM (u,v)$
without this knowledge (see \cite{ReGruyter, Repin2003}). In this section, we briefly discuss some of them addressing the reader to
a more systematic exposition and numerical
tests to the above cited literature and  \cite{NeRe}. 

The first bound of $\MM (u,v)$ has the form
\begin{multline}
\label{upper_bound}
\MM (u,v) \,\leq\, \MM^+(v; \beta, \lambda_1, \lambda_2, \yz)
%\label{majorant_obstacle}
:=(1+\beta^{-1})D_G(\nabla v,\yz) \\
+\frac12 C^2_\Omega(1+\beta)\| \dvg \yz+f+\lambda_1-\lambda_2\|^2 _\Omega
+\IntO (\lambda_1(v-\phi)+\lambda_2(\psi-v))\,dx.
\end{multline}
The majorant $\MM^+$ contains contains free variables:  $\beta > 0$,  $\yz\in Y^*_{\dvg}(\Omega)$, and two nonnegative functions (Lagrange multipliers) $\lambda_1, \lambda_2 \in L^2(\Omega)$. The constant $C_\Omega > 0$ is 
a minimal constant in a Friedrichs type inequality
\ben
\label{friedrichs}
\|w\|\,\leq\,C_\Omega \|\nabla w\|_A \qquad \mbox{for all } w \in V_0.
\een
It is not difficult to show that for any $v$, there exist
$\beta$, $\lambda_1$, $\lambda_2$, and  $\yz$ such that
 \eqref{upper_bound} holds as the equality.
Indeed, set $\yz=\pz$,
and 
\ben
&&\lambda_1=-(\dvg \pz+f),\quad \lambda_2=0 \quad {\rm on}\;\Oum, \nonumber \\
&&\lambda_2=\dvg \pz+f,\quad \lambda_1=0\quad {\rm on}\;\Oup, \label{exact_choice_lambda} \\
&&\lambda_1=0, \quad \lambda_2=0\qquad {\rm on}\;\Oun \nonumber.
\een
Then, the second term of $\MM^+$ vanishes (for any choice of $\beta$) and the third term is equal to $\mu_{\phi\psi}(v)$. By taking a limit $\beta \rightarrow +\infty$, the first term 
converges to 
\be
D_G(\nabla v,\pz)=
\IntO\left(\frac{1}{2}A\nabla v\cdot \nabla v+\frac{1}{2}A^{-1}\pz\cdot\pz-\nabla v\cdot\pz\right)dx=\frac12\|\nabla(v-u)\|^2_A.
\ee

The choice \eqref{exact_choice_lambda} of Lagrange multipliers  is theoretically important since it depends on the exact solution $u$. It is replaced by different choices in practical computations. 
%\begin{remark}[other form of majorants]
If we set alternatively
\ben
&&\lambda_1= \operm{ \dvg \yz+f },\quad \lambda_2=0\quad {\rm on}\;\Ovm, \nonumber \\
&&\lambda_2= \operp{ \dvg \yz+f },\quad \lambda_1=0\quad {\rm on}\;\Oup, \label{easy_choice_lambda}\\
&&\lambda_1=0,\quad \lambda_2=0 \qquad {\rm on}\;\Ovn \nonumber ,
\een
the third term of \eqref{upper_bound} vanishes and we obtain another majorant (which is free of  $\lambda_1, \lambda_2$)
\begin{equation} \label{majorant_obstacle_simplified}
\MM^+_1(v; \beta, \yz)=\frac12(1+\beta^{-1})D_G(\nabla v,\yz)
+\frac12 C^2_\Omega(1+\beta)\| [f+\dvg \yz]_v\|^2,
\end{equation}
where 
\ben %\label{easy_choice_lambda}
[f+\dvg \yz]_v:=
\left\{
\begin{array}{ll}
\operm{ f+\dvg \yz} \quad& \mbox{in } \Ovp, \\
\operp{ f+\dvg  \yz} \quad& \mbox{in } \Ovm,  \\
f+\dvg \yz\quad&\mbox{in } \Ovn.
\end{array}
\right.
\een
More accurate optimization of \eqref{upper_bound} with respect to $\lambda_1, \lambda_2$ provides a sharper majorant \cite{Repin2003}  in the form 
\begin{equation} \label{majorant_obstacle_simplified_sharper}
\MM^+_2(v; \beta, \yz):=\frac12(1+\beta^{-1})D_G(\nabla v,\yz)+\frac12  \IntO R(v,f+\dvg  \yz,\beta)  \,dx ,
\end{equation}
where
\be
R(v,r,\beta):=
\left\{
\begin{array}{ll}
\frac{-(\phi -v)^2}{c_{\beta}} + 2 r (\phi - v) \quad&\mbox{if } c_{\beta} r + v \leq \phi,\\
\frac{-(\psi -v)^2}{c_{\beta}} + 2 r (\psi - v) \quad&\mbox{if } c_{\beta} r + v \geq \psi,\qquad c_{\beta}=C^2_\Omega(1+\beta)\\
c_{\beta} r^2 \quad&\mbox{if } \phi < c_{\beta} r + v < \psi.
\end{array}
\right.
\ee
Practical computations of majorants for the classical obstacle problem are further explained in \cite{BuRe,HaVa,HaVa2}.

%Although the form of \eqref{majorant_obstacle_simplified} is simpler in comparison with \eqref{majorant_obstacle}, the estimate 
%$$ \MM\{(u,\pz),(v,\yz)\} \,\leq\, \tilde {\MM} ^+(v; \beta, \yz) $$ is not sharp. 
%\end{remark}
%For the symmetrical exposition of results we add the division of $\Omega $ into three sets:
%\ben
%&\Omega ^{\pz}_\om:=\{x\in\Omega \,\mid\, \dvg \pz +f  < 0\}\,, \nonumber  \\
%&\Omega ^{\pz}_\op:=\{x\in\Omega \,\mid\, \dvg \pz +f  > 0 \}\,, \label{coincidence_sets_exact_flux}    \\
%&\Omega ^{\pz}_0:=\{x\in\Omega\,\mid\,\dvg \pz +f  = 0 \}\,. \nonumber 
%\een
%With respect to \eqref{equilibrium_of_p*}, clearly $\Omega ^{\pz}_\om = \Omega ^{u}_\om, \Omega ^{\pz}_\op = \Omega ^{u}_\op, \Omega ^{\pz}_0 = \Omega ^{u}_0$.
%
%\begin{proposition} We have the following particular cases:  
%\begin{enumerate}
%\item If true flux coincidence sets cover approximate flux coincidence set, 
%$$\Omega ^{\pz}_\om \supset \Omega ^{\yz}_\om, \qquad \Omega ^{\pz}_\op \supset \Omega ^{\yz}_\op $$ then 
%$$\MM(\{u,\pz\},\{v,\yz\})= \frac12 ||A \nabla v - \yz ||_{A^{-1}}^2$$
%\end{enumerate}
%\end{proposition}

\begin{remark}
Since 
$J(v)-J(w) \geq J(v) - J(u)$ holds for all $w \in K$, we always have a computable lower bound 
\begin{equation}
\label{lower_bound}
J(v)-J(w)=:\MM^-(v,w) \leq \MM (u,v).
\end{equation}
In practice, a suitable $w$ can be constructed by local (e.g., patch wise) improvement
of $v$ and ideas of hierarchical basis methods.
\end{remark}

%%%%%%%%%%%%%%%%%%%%%%%%%%%%%%%%%%%%%%%%%%%%%%%%%%%%%%%%%%%

\section{Double obstacle problem}
\subsection{Variational setting}
The following double-obstacle problem (also known as the two--phase obstacle problem), was studied in 
H. Shahgholian, N. N. Uraltseva, and G. S.Weiss \cite{ShahgolianUraltsevaWeiss}, 
N.N. Uraltseva \cite{Uraltseva2001},
G. S. Weiss \cite{Weiss} and some other papers cited therein. Here the variational (energy) functional $J(v)$ is defined by the relation 
\begin{equation}
J(v) := \IntO \Big( \frac{1}{2} A \nabla v\cdot\nabla v  - f v + \alpha_{\op} \operp{ v } + \alpha_{\om} \operm{ v }  \Big) \dx.    \label{eq:energy}
\end{equation}
The functional $J(v)$  is minimized on the set
$$
V_0+u_D:=\{v=v_0+u_D\,:\, v_0 \in V_0,\;
u_D \in H^1(\Omega)\}. 
$$
Here $u_D$ is a given bounded function that defines the boundary condition
($u_D$   may attain both positive and negative values on different parts of the boundary $\partial \Omega$). It is assumed that  the coefficients $\alpha_{\op},
\alpha_{\om}: \Omega \rightarrow \mathbb{R}$ are  positive
constants (without essential difficulties the consideration and main results can be extended to the case where they are positive
Lipschitz continuous functions). Also, it is assumed that
 $f \in L^{\infty}(\Omega)$, $A \in L^{\infty}(\Omega, \mathbb{R}^{d \times d})$, 
 and  the condition \eqref{A_assumptions} holds. 
 Since the functional $J(v)$ is strictly
convex and continuous  on $V$, existence and uniqueness of a minimizer $u \in K$ is guaranteed by well known results
of the calculus of variations (see, e.g., \cite{EkTe}).  Analysis of the corresponding 
Euler-Lagrangian equation leads to  the nonlinear problem
(\cite{ShahgolianUraltsevaWeiss,Weiss,Uraltseva2001})
\begin{equation}
{\rm{div}} (A \nabla u) + f = \alpha_{\op}\,\chi_{\{u>0 \}} -
\alpha_{\om} \,\chi_{\{u<0 \}},
 \qquad u=u_D\;{\rm on}\;\partial\Omega,
 \label{euler-lagrange}
 \end{equation}
where $\chi$ denotes the characteristic function of a set (attaining values 1 and 0 inside and outside the set, respectively).
A physical interpretation of the problem \eqref{euler-lagrange} is presented by
an elastic membrane touching the planar phase boundary between two
liquid/gaseous phases (see, e.g., \cite{ShahgolianUraltsevaWeiss}). 

We introduce two decompositions
of $\Omega$ associated with the minimizer $u$
and an approximation $v$:
\ben
&\Oum:=\{x\in \Omega \,\mid\, u(x)<0\}, \nonumber  \\
&\Oup:=\{x\in \Omega \,\mid\, u(x)>0\},  \label{coincidence_sets_double}\\
&\Oun:=\{x\in \Omega \,\mid\, u(x)=0\}, \nonumber
\label{Omega_u_double_obstacle}
\een
and
\ben 
&\Ovm:=\{x\in \Omega \,\mid\, v(x)<0\}, \nonumber\\
&\Ovp:=\{x\in \Omega \,\mid\, v(x)>0\}, \label{coincidence_sets_approximate_double}\ \\
&\Ovn:=\{x\in \Omega \,\mid\, v(x)=0\}. \nonumber
\label{Omega_v_double_obstacle}
\een
These decompositions generate exact and approximate free boundaries. Using the above notation we can rewrite
 \eqref{euler-lagrange} as follows
\ben \label{equilibrium_double}
\dvg (A\nabla u) +f = 
\begin{cases}
\, \alpha_{\op} \,\qquad \mbox{in } \Oup, \\
\, -\alpha_{\om} \, \quad \mbox{in } \Oum,\\
\, 0 \quad \,\qquad \mbox{in } \Oun.
\end{cases}
\een 
%%%%%%%%%%%%%%%%%%%%%%
\subsection{Error measures}
The problem is reduced to  \eqref{1.1} if $V=V_0:=H^1_0(\Omega)$, $Y=L^2(\Omega,\Rd)$, $\Lambda w=\nabla w$,
$\Lambda^*y^*=-\dvg y^*$, and the functionals
\be
&&\wh G(y)=\frac12\IntO  A(y+y_D)\cdot (y+y_D)\,dx,\quad y_D=\nabla u_D,\\
&&\wh F(v_0):=   \IntO \Big(-f (v_0+u_D) + \alpha_{\op} \operp{ v_0+u_D} + \alpha_{\om} \operm{ v_0+u_D }  \Big) \dx
\ee
stand for $G$ and $F$, respectively.
The problem is to find $u_0\in V_0$ such that the functional
$\wh J(v_0)=\wh G(\nabla v_0)+\wh F(v_0)$ attains infimum on the space $V_0$.
\begin{multline}
\label{Gz}
\wh G^*(y^*)=\sup\limits_{y\in Y}\IntO \left(\yz\cdot y-\frac12 A(y+y_D)\cdot (y+y_D)\right) \dx\\
=
\sup\limits_{y\in Y}\IntO \left(\yz\cdot(y-y_D)-\frac12 Ay\cdot y \right) \dx=
\IntO \left(\frac12 A^{-1}\yz\cdot\yz-\yz\cdot y_D \right) \dx
\end{multline}
Hence, 
\ben
\label{DwhG}
D_{\wh G}(\Lambda v_0,\yz)=\IntO\left(\frac12 A\nabla  v\cdot\nabla  v+\frac12A^{-1}\yz\cdot\yz-\yz\cdot \nabla v \right) \dx 
=\frac12\|A\nabla v-\yz\|^2_{A^{-1}},
\een
for any $ v=v_0+u_D$.
Computation of  $\wh F^*(\vz)$ is more sophisticated.
\begin{lemma}
Let $\vz\in L^\infty(\Omega)$. Then,
\ben
\label{Fz_double_obstacle}
\wh F^*(\vz)=\left\{
\begin{array}{cc}
-\IntO \vz u_D \dx           &  \mbox{if }\; v^*+f \in [- \alpha_{\om},  \alpha_{\op}], \\
+\infty      & \mbox{else}.
\end{array}
\right.
\een
\end{lemma}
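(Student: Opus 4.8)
The plan is to compute the Young–Fenchel transform $\wh F^*(\vz) = \sup_{v_0 \in V_0} \left( \IntO \vz v_0 \dx - \wh F(v_0) \right)$ directly from the definition, changing variables to $v = v_0 + u_D$. First I would write
\[
\wh F^*(\vz) = \sup_{v \in H^1(\Omega)} \IntO \Big( \vz (v - u_D) + f v - \alpha_\op \operp{v} - \alpha_\om \operm{v} \Big) \dx,
\]
where the supremum over $v_0 \in V_0$ can be replaced by the supremum over $v \in V_0 + u_D$, and then (since the boundary trace plays no role in the integrand, which involves only pointwise values of $v$) relaxed to a pointwise supremum over measurable $v$. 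The key observation is that the integrand is local in $v(x)$, so the supremum passes inside the integral: $\wh F^*(\vz) = -\IntO \vz u_D \dx + \IntO \sup_{t \in \R} g(x,t) \dx$, where $g(x,t) := (\vz(x) + f(x)) t - \alpha_\op (t)_\op - \alpha_\om (t)_\om$.

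The main step is then to evaluate the scalar supremum $\sup_{t \in \R} g(x,t)$. Writing $r := \vz(x) + f(x)$ and splitting $t = (t)_\op - (t)_\om$ with $(t)_\op, (t)_\om \geq 0$ independent, one has $g = (r - \alpha_\op)(t)_\op + (-r - \alpha_\om)(t)_\om = (r - \alpha_\op)(t)_\op - (r + \alpha_\om)(t)_\om$. Since the two parts are independently nonnegative, the supremum is $0$ (attained at $t=0$) precisely when the coefficient of $(t)_\op$ is $\leq 0$ and the coefficient of $(t)_\om$ is $\geq 0$, i.e. $r \leq \alpha_\op$ and $r \geq -\alpha_\om$, that is $r \in [-\alpha_\om, \alpha_\op]$; otherwise the supremum is $+\infty$ (let $(t)_\op \to +\infty$ if $r > \alpha_\op$, or $(t)_\om \to +\infty$ if $r < -\alpha_\om$). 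Substituting back, $\wh F^*(\vz) = -\IntO \vz u_D \dx$ when $\vz + f \in [-\alpha_\om, \alpha_\op]$ a.e.\ in $\Omega$, and $+\infty$ otherwise, which is exactly \eqref{Fz_double_obstacle}.

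The one point requiring a little care — and the place I expect the only genuine subtlety — is justifying the interchange of supremum and integral, i.e.\ that taking $v(x)$ to minimize the integrand pointwise yields an admissible competitor and that the ``$+\infty$'' case is genuinely attained in the class $V_0 + u_D$. For the finite case this is immediate: $v = u_D$ (equivalently $v_0 = 0$) is admissible and achieves the bound. For the infinite case one notes that if $\vz + f > \alpha_\op$ on a set of positive measure, one can choose $v_0 \in V_0$ supported in that set with $v_0 \geq 0$ and $\|v_0\|$ arbitrarily large in the appropriate direction, driving the functional to $+\infty$; the measurability of the pointwise-optimal selection and a standard truncation/density argument in $H^1_0$ handle the rest. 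Since $\vz \in L^\infty(\Omega)$ and $f \in L^\infty(\Omega)$, all integrals in the finite case are well defined, so no integrability obstruction arises.
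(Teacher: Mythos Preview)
Your overall strategy --- reduce $\wh F^*(\vz)$ to a pointwise scalar supremum $\sup_{t\in\R}\bigl((\vz+f)t-\alpha_\op(t)_\op-\alpha_\om(t)_\om\bigr)$ and observe that it equals $0$ or $+\infty$ according to whether $\vz+f\in[-\alpha_\om,\alpha_\op]$ --- is correct and is essentially the paper's approach as well. However, there is a genuine gap in the finite case.

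You write that ``$v=u_D$ (equivalently $v_0=0$) is admissible and achieves the bound.'' This is false. The pointwise maximizer of $g(x,t)$ is $t=0$, i.e.\ $v\equiv 0$, \emph{not} $v=u_D$. Plugging $v=u_D$ into the integrand gives
\[
\IntO\Bigl((\vz+f)u_D-\alpha_\op(u_D)_\op-\alpha_\om(u_D)_\om\Bigr)\dx
=\int_{\{u_D>0\}}(\vz+f-\alpha_\op)u_D\dx+\int_{\{u_D<0\}}(\vz+f+\alpha_\om)u_D\dx,
\]
which is $\le 0$ but in general strictly negative, since $u_D$ is explicitly allowed to take nonzero values on $\partial\Omega$. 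The function $v\equiv 0$, which \emph{would} realize the value $0$, does not lie in $V_0+u_D$. This is precisely the subtlety the paper addresses: it builds a cut--off $\lambda^\epsilon_1$ equal to $1$ on $\partial\Omega$ and vanishing at distance $>\epsilon$ from $\partial\Omega$, sets $v^\epsilon=\lambda^\epsilon_1 u_D\in V_0+u_D$, and lets $\epsilon\to 0$ so that the two sign--split integrals above (now over the shrinking strip $\{ \mathrm{dist}(x,\partial\Omega)<\epsilon\}$) tend to $0$. Without such an approximating sequence your upper bound $0$ is not shown to be attained in the admissible class, and the equality $\wh F^*(\vz)=-\IntO\vz u_D\dx$ is not established.

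A smaller point: in the $+\infty$ case you say ``choose $v_0\in V_0$ supported in that set''. A set of positive measure need not contain the support of any nontrivial $H^1_0$ function, so this step also needs a concrete construction; the paper again handles this with explicit bump functions $\lambda^\epsilon_2$ supported near a ball inside the bad set, combined with the boundary cut--off $\lambda^\epsilon_1$ to keep the test function in $V_0+u_D$.
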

\begin{proof}
Assume that $\vz+f>\alpha_+$ on some open subset $\omega\subset\Omega$.
Then this inequality holds on a ball $B\subset\omega$. Define two smooth cut off functions
$\lambda^\epsilon_1$ and $\lambda^\epsilon_2$ such that
\be
& \lambda^\epsilon_i(x)\in [0,1],\qquad & i=1,2,\\
&\lambda^\epsilon_1=1\;{\rm on}\;\partial\Omega,\qquad &\lambda^\epsilon_1=0\;{\rm if}\;
{\rm dist}(x,\partial\Omega)>\epsilon,\\
&\lambda^\epsilon_2=1\;{\rm in}\;B,\qquad &\lambda^\epsilon_2=0\;{\rm if}\;
{\rm dist}(x,B)>\epsilon,\quad {\rm supp}\lambda^\epsilon_2\subset\omega.
\ee
Here $\epsilon$ is a positive quantity smaller than $\frac12{\rm dist}(B,\partial\Omega)$. For any $\rho\in {\mathbb R}$, the function 
$v^\epsilon:=\lambda^\epsilon_1 u_D+\rho \lambda^\epsilon_2$ belongs to
$V_0+u_D$.
It is not difficult to see that
\be
v^\epsilon=\left\{
\begin{array}{cc}
\lambda^\epsilon_1 u_D              &{\rm in}\;  S^\epsilon_1:={\rm supp}\lambda^\epsilon_1, \\
\rho\lambda^\epsilon_2           & {\rm in}\;   S^\epsilon_2:={\rm supp}\lambda^\epsilon_2\setminus B, \\
\rho  &{\rm in}\; B,\\
0    & \mbox{in\,\
all\,other\,points}
\end{array}
\right.
\ee
and
\be
(v^\epsilon)_-=\left\{
\begin{array}{cc}
\lambda^\epsilon_1 (u_D)_-              &{\rm in}\;  S^\epsilon_1, \\
%0         & {\rm in}\;   S^\epsilon_2, \\
%0  &{\rm in}\; B,\\
0    & \mbox{in\,\
all\,other\,points}
\end{array}
\right. ,
\qquad
(v^\epsilon)_+=\left\{
\begin{array}{cc}
\lambda^\epsilon_1 (u_D)_+              &{\rm in}\;  S^\epsilon_1, \\
\rho\lambda^\epsilon_2           & {\rm in}\;   S^\epsilon_2, \\
\rho  &{\rm in}\; B,\\
0    & \mbox{in\,\
all\,other\,points}.
\end{array}
\right.
\ee
Therefore,
\ben
\wh F^*(\vz)&=&\sup\limits_{v_0\in V_0}
\left\{\IntO (\vz v_0+f (v_0+u_D)-\alpha_-(v_0+u_D)_--\alpha_+(v_0+u_D)_+)\dx \right\} \nonumber \\
&=&\sup\limits_{v\in V_0+u_D}
\left\{\IntO ((\vz+f) v-\alpha_-(v)_-    -\alpha_+(v)_+) \dx\right\}-\IntO \vz u_D \dx   \nonumber \\
&&\geq
\IntO ((\vz+f) v^\epsilon-\alpha_-(v^\epsilon)_-    -\alpha_+(v^\epsilon)_+) \dx-\IntO \vz u_D \dx  \label{Fz}\\
&&\quad =\int\limits_{S^\epsilon_1}((\vz+f)\lambda^\epsilon_1 u_D-\alpha_-(u_D)_--\alpha_+(u_D)_+) \dx  \nonumber \\
&&\qquad +\int\limits_{S^\epsilon_2} \rho(\vz+f-\alpha_+)\lambda^\epsilon_2 \dx+
\rho \int\limits_{S^\epsilon_2} (\vz+f-\alpha_+) \dx-\IntO \vz u_D \dx. \nonumber
\een
Let $\epsilon\rightarrow 0$ and $\rho\rightarrow+\infty$. Then the first integral in the right
hand side vanishes, the second is positive and the third tends to $+\infty$.
Hence, $F^*(\vz)=+\infty$. 

Quite analogously we prove that $F^*(\vz)=+\infty$ if $\vz+f<\alpha_-$
on some open set $\omega\subset\Omega$.
It remains to show that $F^*(\vz)=-\IntO \vz u_D\,dx$ if $-\alpha_-\leq \vz+f\leq\alpha_+$.
For this purpose, we define $v^\epsilon:=\lambda^\epsilon_1 u_D$.
In this case,
\begin{multline}
\label{Fzo}
\wh F^*(\vz)=
\sup\limits_{v\in V_0+u_D}
\left\{\IntO ((\vz+f) v-\alpha_-(v)_-    -\alpha_+(v)_+)dx\right\}-\IntO \vz u_D \dx\\
=
\int\limits_{\Omega^v_-} ((\vz+f +\alpha_-) vdx+\int\limits_{\Omega^v_+}(\vz+f-\alpha_+)v \dx-\IntO \vz u_D \dx.
\end{multline}
We see that the first two integrals are nonpositive, so that
$\wh F^*(\vz)\leq -\IntO \vz u_D \dx$. On the other hand,
\be
\int\limits_{\Omega^{v^\epsilon}_-} ((\vz+f +\alpha_-) v^\epsilon \dx+\int\limits_{\Omega^{v^\epsilon}_+}(\vz+f-\alpha_+)v^\epsilon \dx\rightarrow 0
\ee
as $\epsilon\rightarrow 0$ and we arrive at 
(\ref{Fz_double_obstacle}).
\end{proof}
\begin{corollary}
If $\vz$ satisfies $-\alpha_-\leq \vz+f\leq\alpha_+$, then
\be
D_{\wh F}(v_0)={\wh F}(v_0)+{\wh F}^*(\vz)-<\vz,v_0>
%=\IntO \Big(-f (v_0+u_D) + \alpha_{\op} \operp{ v_0+u_D} + \alpha_{\om} \operm{ v_0+u_D }  \Big) dx-\IntO \vz u_D\,dx-\IntO \vz v_0\,dx\\
=\IntO \Big(-(f+\vz) v + \alpha_{\op} \operp{ v} + \alpha_{\om} \operm{ v }  \Big) \dx,
\ee
where $v=v_0+u_D$.
Hence, if
\be
\yz\in Y^*_{\dvg,[-\alpha_-,\alpha_+]}:=
\Bigl\{\yz\in Y^*: \dvg\yz+f\in [-\alpha_-,\alpha_+]\;a.e.\,in\;\Omega\Bigr\},
\ee
then
\begin{multline}\label{compound_F_double}
D_{\wh F}(v_0,-\Lambda^*\yz)
=\IntO \left(\alpha_{\op} \operp{ v } + \alpha_{\om} \operm {v } -( \dvg \yz +f)v \right) \dx = \\
= \int\limits_{\Ovp}\left(\alpha_{\op}-( \dvg \yz+f)\right)v \dx + \int\limits_{\Ovm}\left(-\alpha_{\om}-( \dvg \yz+f)\right)v \dx 
\end{multline}
\end{corollary}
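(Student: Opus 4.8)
The statement is a bookkeeping consequence of the preceding Lemma and of the definition of the compound functional, so the plan is to prove it by direct substitution in two steps, one for each assertion.

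First I would expand $D_{\wh F}(v_0)=\wh F(v_0)+\wh F^*(\vz)-\langle\vz,v_0\rangle$ term by term. Under the hypothesis $-\alpha_-\le\vz+f\le\alpha_+$ the Lemma, i.e.\ \eqref{Fz_double_obstacle}, gives $\wh F^*(\vz)=-\IntO\vz u_D\dx$; the term $\wh F(v_0)$ is written out from its definition, and, since $\vz\in L^\infty(\Omega)\subset L^2(\Omega)\hookrightarrow V^*$ and $v_0\in V_0$, the pairing is simply $\langle\vz,v_0\rangle=\IntO\vz v_0\dx$. Setting $v=v_0+u_D$ and using $\vz u_D+\vz v_0=\vz v$ merges the two $\vz$-integrals with the term $-\IntO fv\dx$ coming from $\wh F$ into $-\IntO(f+\vz)v\dx$, which is exactly the claimed identity.

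For the second assertion I would specialize $\vz=-\Lambda^*\yz=\dvg\yz$. The condition $\yz\in Y^*_{\dvg,[-\alpha_-,\alpha_+]}$ says $\dvg\yz+f\in[-\alpha_-,\alpha_+]$ almost everywhere; since $f\in L^\infty(\Omega)$ and the interval is bounded, this also forces $\dvg\yz\in L^\infty(\Omega)$, so $\vz=\dvg\yz$ meets the hypotheses of the first step and the first displayed equality of the corollary follows at once. For the second equality I would partition $\Omega=\Ovp\cup\Ovm\cup\Ovn$ according to \eqref{coincidence_sets_approximate_double} and use that $\operp{v}=v$, $\operm{v}=0$ on $\Ovp$, that $\operp{v}=0$, $\operm{v}=-v$ on $\Ovm$, and that $v\equiv 0$ on $\Ovn$ (so that subset contributes nothing); collecting the surviving contributions gives $\int_{\Ovp}(\alpha_{\op}-(\dvg\yz+f))v\dx+\int_{\Ovm}(-\alpha_{\om}-(\dvg\yz+f))v\dx$.

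I do not expect a genuine obstacle here. The only points that warrant a word of care are verifying that $\dvg\yz$ actually belongs to $L^\infty(\Omega)$ so that the Lemma is applicable, and keeping the sign conventions $\operp{z}=\max\{0,z\}$, $\operm{z}=-\min\{0,z\}$ straight when the integral is split over the sign sets of $v$; the rest is routine algebra.
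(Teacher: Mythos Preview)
Your proposal is correct and is precisely the direct substitution the paper has in mind; the paper states the corollary without a separate proof, and your two-step expansion (insert the Lemma's value of $\wh F^*(\vz)$, then split the integral over $\Ovp\cup\Ovm\cup\Ovn$) is exactly the intended verification. The observation that $\dvg\yz\in L^\infty(\Omega)$ follows from $f\in L^\infty(\Omega)$ and the boundedness of $[-\alpha_-,\alpha_+]$ is the right way to bring the Lemma into play for the second assertion.
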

%The constrain \eqref{constrain_flux} never appear in the classical obstacle problem but it needs to be satisfies for any approximation $\yz$ of the double obstacle problem. \\
To obtain error identities, we need to express \eqref{compound_F_double} for two particular cases where $\yz=\pz$ and  $v=u$. For the first case, we have
\ben
\label{compound_F_primal_double}
D_{\wh F}(v_0,-\Lambda^*\pz)\!
=\! \int\limits_{\Ovp}\!\!\left(\alpha_{\op}-( \dvg \pz+f)\right)v \dx + \int\limits_{\Ovm}\!\!\left(-\alpha_{\om}-( \dvg \pz+f)\right)v \dx.
\een
Since $\pz= A\nabla u$ the relation \eqref{equilibrium_double} guarantees that
$ \dvg \pz +f \in [- \alpha_{\om}, - \alpha_{\op}]$ 
almost everywhere in $\Omega$ and, therefore, $\pz\in  Y^*_{\dvg,[\alpha_-,\alpha_+]} $.
Introduce the sets
\be
 \omega_+:=\Ovp \cap \Oun, \quad
 \omega_-:=\Ovm \cap \Oun, \quad
\omega_\pm:=\left\{\Ovp \cap \Oum \right\}\cup \left\{\Ovm \cap \Oup \right\},
\ee
which qualify the difference between exact coincidence sets and those
formed by $v$ (see Fig. \ref{coincidence_sets_double_obstacle}). The remaining part $\wh \Omega:=\Omega\setminus\omega$ (where $\omega:= \omega_+ \cup \omega_- \cup \omega_\pm$) contains the points of $\Omega$ which 
belong to $\Ovp \cap \Oup$ or $\Ovn \cap \Oun$. In view of (\ref{equilibrium_double}), at these points integrands of (\ref{compound_F_primal_double}) vanish and we obtain
\ben 
\label{measure_primal_double}
 \mu_\omega(v) :=  D_{\wh F}(v_0,-\Lambda^*\pz)=\int\limits_\Omega \alpha(x)|v| \dx,\quad v=v_0+u_D,
   %=\alpha_\op\int\limits_{\omega^0_+} v dx
%- \alpha_\om\int\limits_{\omega^0_-} v dx
%+ (\alpha_{\op} +\alpha_\om)\int\limits_{\omega_\pm }|v| dx, 
\een
where
\ben
\alpha(x)=\left\{
\begin{array}{ll}
\alpha(x)=0\quad & {\rm if}\;x\in \wh \Omega,\\
\alpha(x)=\alpha_\op\quad & {\rm if}\;x\in \,\omega_+,\\
\alpha(x)=\alpha_\om\quad & {\rm if}\;x\in \,\omega_-,\\
\alpha(x)=\alpha_\op + \alpha_\om \quad & {\rm if}\;x\in \,\omega_\pm.
\end{array}
\right.
\een
The right hand side of \eqref {measure_primal_double}  is a nonnegative functional (measure),
which is equal to zero if $\Ovp$ coincides with $\Oup$ and $\Ovm$ coincides
with $\Oum$.
%%%%%%%%%%%%%%%%%%%
\begin{figure}
  \centering
\begin{minipage}{12cm}
  \includegraphics[width=\textwidth]{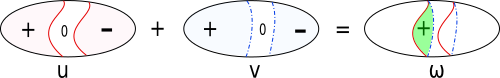} 
\end{minipage} 
\label{coincidence_sets_double_obstacle}
\caption{Illustration example to the double obstacle problem: exact coincidence sets $\Oup, \Oun, \Oum$ with the exact free boundary indicated by red full curves (left), approximate coincidence sets $\Ovp, \Ovn, \Ovm$ with the approximate free boundary indicated by blue dash-dot curves  (middle). Exact and approximate sets do not coincide and it results in an nonempty intersection set 
$\omega_+:=\Ovp \cap \Oun$ filled by the green area (right), where the primal nonlinear measure $\mu_\omega(v)$ contributes to the error.  Note that sets $\omega_-$ and $\omega_\pm$ are empty in this simple example.}
\end{figure}

For the second case, we have
\ben
\label{measure_dual_double}
\mu^*_\omega(\yz) := D_{\wh F}(u_0,-\Lambda^*\yz) = \int\limits_{\Oup}\left(\alpha_{\op}-( \dvg \yz+f)\right)u \dx + \int\limits_{\Oum}\left(-\alpha_{\om}-( \dvg \yz+f)\right)u \dx.
\een
Again we may view the right hand side as a certain measure, which is
zero if  the sets
\be
\Omega^{\yz}_-:=\left\{\dvg\yz+f+\alpha_-=0\right\} \qquad
{\rm and}\qquad
\Omega^{\yz}_+:=\left\{\dvg\yz+f-\alpha_+=0\right\}
\ee
coincide with the sets $\Oum$ and $\Oup$, respectively.

Now  \eqref{erroridentity1}, \eqref{mainidentity}, and  \eqref{mainidentity_dual} 
imply the following result.

\begin{theorem}\label{theorem3}
Let $v\in V_0+u_D$ and $\yz\in Y^*_{\dvg,[\alpha_-,\alpha_+]} $ 
be approximations of $u$ and $\yz$, respectively. Then
\ben
&&\MM (u,v):=\frac12 || \nabla (u - v) ||_A^2 + \mu_\omega(v) = J(v) - J(u), \label{MM_final_obstacle_double} \\
&&\MM (\pz,\yz):=\frac12 ||  \pz - \yz ||_{A^{-1}}^2 +  \mu_{\omega}^{*}(\yz) =\wh I^*(\pz) - \wh I^*(\yz),
 \label{MM_final_obstacle_dual_double}
 \\
\label{equality_double_main}
&&\MM(\{u,\pz\},\{v,\yz\})=\frac12\|A\nabla v-\yz\|^2_{A^{-1}}+ \Upsilon(v,\yz),
\een
where
\ben \label{majorant_nonlinear_double}
\Upsilon(v,\yz):=\IntO(\alpha_+(v)_+
+\alpha_-(v)_- -(f+\dvg\yz) v)\,dx
\een
is a nonnegative functional, which vanishes
if $\yz=\pz$ and $v=u$.
\end{theorem}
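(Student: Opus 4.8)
The plan is to follow exactly the pattern already established for the classical obstacle problem in Theorems \ref{theorem1} and \ref{theorem2}, invoking the general error identities \eqref{erroridentity1}, \eqref{mainidentity}, \eqref{mainidentity_dual} together with the concrete expressions for the compound functionals derived above. First I would establish \eqref{MM_final_obstacle_double}: by \eqref{mainidentity} we have $\MM(u,v)=D_{\wh G}(\Lambda v,\pz)+D_{\wh F}(v_0,-\Lambda^*\pz)=J(v)-J(u)$, and then I substitute \eqref{DwhG} with $\yz=\pz$ (which, since $\pz=A\nabla u$, gives exactly $\tfrac12\|A\nabla v-\pz\|_{A^{-1}}^2=\tfrac12\|\nabla(u-v)\|_A^2$) and the definition \eqref{measure_primal_double} of $\mu_\omega(v)$. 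Dually, \eqref{MM_final_obstacle_dual_double} follows from \eqref{mainidentity_dual} in the same way, using $D_{\wh G}(\Lambda u,\yz)=\tfrac12\|\pz-\yz\|_{A^{-1}}^2$ (again from \eqref{DwhG} evaluated at $v=u$, so $A\nabla u=\pz$) and the definition \eqref{measure_dual_double} of $\mu_\omega^*(\yz)$. Here I must take care that the hypothesis $\yz\in Y^*_{\dvg,[\alpha_-,\alpha_+]}$ is precisely what makes the Corollary applicable, so $D_{\wh F}(u_0,-\Lambda^*\yz)$ has the stated finite form rather than being $+\infty$.

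The third identity \eqref{equality_double_main} is the analogue of Theorem \ref{theorem2} and is proved by the same computation. Adding \eqref{MM_final_obstacle_double} and \eqref{MM_final_obstacle_dual_double} gives $\MM(\{u,\pz\},\{v,\yz\})=\MM(u,v)+\MM(\pz,\yz)=J(v)-\wh I^*(\yz)$ by the standard relation between primal and dual functionals. Then I would write out $\wh J(v)=\tfrac12\|\nabla v\|_A^2-\IntO f v\dx+\IntO(\alpha_+(v)_+ +\alpha_-(v)_-)\dx$ and $\wh I^*(\yz)=-G^*(\yz)-F^*(-\Lambda^*\yz)=-\tfrac12\|\yz\|_{A^{-1}}^2+\IntO\yz\cdot\nabla u_D\dx+\IntO\yz u_D\dx$ — wait, more carefully, $-\wh F^*(-\Lambda^*\yz)=-(-\IntO(-\Lambda^*\yz)u_D\dx)=\IntO(\dvg\yz)u_D\dx$ using \eqref{Fz_double_obstacle} and $\Lambda^*\yz=-\dvg\yz$, valid since $\yz\in Y^*_{\dvg,[\alpha_-,\alpha_+]}$. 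Combining and using the algebraic identity $\tfrac12\|\nabla v\|_A^2+\tfrac12\|\yz\|_{A^{-1}}^2-\IntO\yz\cdot\nabla v\dx=\tfrac12\|A\nabla v-\yz\|_{A^{-1}}^2$ together with integration by parts $\IntO\yz\cdot\nabla v\dx=-\IntO(\dvg\yz)v\dx$ (the boundary terms cancel against those from $u_D$ since $v-u_D\in V_0$), the remaining terms collapse to exactly $\Upsilon(v,\yz)$ as defined in \eqref{majorant_nonlinear_double}.

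Finally I would verify the two asserted properties of $\Upsilon$: nonnegativity and vanishing at $\yz=\pz$, $v=u$. Nonnegativity follows pointwise from the constraint $\dvg\yz+f\in[-\alpha_-,\alpha_+]$: on $\{v>0\}$ the integrand equals $(\alpha_+-(\dvg\yz+f))v\ge 0$, on $\{v<0\}$ it equals $(-\alpha_--(\dvg\yz+f))v\ge 0$ since both factors are nonpositive, and on $\{v=0\}$ it is zero; this is the same sign argument already used in the Corollary leading to \eqref{compound_F_double}. For the vanishing at the exact solution, with $\yz=\pz$ and $v=u$ the integrand reduces to $\alpha(x)|u|$ with $\alpha$ as in \eqref{measure_primal_double}, and since $u>0$ only on $\Oup$ (where $\omega_+=\omega_\pm=\emptyset$ relative to $v=u$, i.e. $\alpha=\alpha_+$ cancels against $\dvg\pz+f=\alpha_+$), one gets zero directly from \eqref{equilibrium_double}; alternatively it is immediate from $\MM(\{u,\pz\},\{u,\pz\})=0$ and \eqref{equality_double_main}, since then $A\nabla u=\pz$ kills the quadratic term too.

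The computations are all routine; the only genuine subtlety — the main point requiring attention rather than a real obstacle — is keeping the $u_D$-dependent boundary contributions straight, namely making sure that the extra $-\IntO\yz u_D\dx$ term in $\wh F^*$ and the $-\yz\cdot y_D$ term in $\wh G^*$ combine with the integration-by-parts boundary term for $v=v_0+u_D$ so that everything not belonging to $\tfrac12\|A\nabla v-\yz\|_{A^{-1}}^2$ or to $\Upsilon$ indeed cancels. This is purely bookkeeping with the inhomogeneous boundary data, exactly parallel to the homogeneous case already handled in Theorem \ref{theorem2}, so no new ideas are needed.
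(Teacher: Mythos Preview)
Your proposal is correct and follows essentially the same route as the paper: invoke \eqref{mainidentity} and \eqref{mainidentity_dual} with the explicit compound functionals \eqref{DwhG}, \eqref{measure_primal_double}, \eqref{measure_dual_double} to get the first two identities, then sum them and reduce $J(v)-\wh I^*(\yz)$ to the stated form via the quadratic identity and integration by parts, with the $u_D$-contributions from $\wh G^*$ and $\wh F^*$ cancelling against the boundary term. The paper's proof does not spell out the pointwise nonnegativity of $\Upsilon$ within the proof itself (it is deferred to a subsequent remark), so your inclusion of that argument is a minor addition but not a departure.
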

\begin{proof}
We apply \eqref{mainidentity} and  \eqref{mainidentity_dual}.
Notice that $\wh J(v_0)=G(\nabla v_0)+F(v_0)=J(v)$.
Next,
\be
D_{\wh F}(v_0,-\Lambda^*\pz)+D_{\wh G}(\Lambda v_0,\pz)=\mu_\omega(v)+
\frac12\|A\nabla(u- v)\|^2_{A}.
\ee
It is easy to see that for any $v=v_0+u_D\in V_0+u_D$, the functional
\be
J(v):=\frac12\IntO A\nabla v\cdot\nabla v\,dx-\IntO(fv-\alpha_+(v)_+-\alpha_(v)_-)dx
\ee
 coincides with $\wh J(v_0)$ and $\wh J(u_0)$ coincides with $J(u)$.
 Since
 \be
 D_{\wh F}(v_0,-\Lambda^*\pz)+D_{\wh G}(\Lambda v_0,\pz)=
 \wh J(v_0)-\wh J(u_0)=J(v)-J(u),
 \ee
we arrive at (\ref{MM_final_obstacle_double}).

Since $u_0=u-u_D$ (where $u$ satisfies the relation $A\nabla u=\pz$), we use (\ref{DwhG}) and \eqref{measure_dual_double}
and obtain
\be
D_{\wh F}(u_0,-\Lambda^*\yz)+
D_{\wh G}(\Lambda u_0,\yz)=\mu^*_\omega(\yz)+\frac12 ||  \pz - \yz ||_{A^{-1}}^2.
\ee
Now (\ref{mainidentity_dual}) yields  
(\ref{MM_final_obstacle_dual_double}), where
\ben
\wh I^*(\yz)=-\wh G^*(\yz)-\wh F^*(-\Lambda^*\yz)=-\frac12\|\yz\|^2_{A^{-1}}
+\IntO (\yz\cdot \nabla u_D+\dvg\yz u_D) \dx\\
=-\frac12\|\yz\|^2_{A^{-1}}
+\int\limits_{\partial\Omega} (\yz\cdot  n) u_D \dx.
\een

Finally, summation of (\ref{MM_final_obstacle_double}) and (\ref{MM_final_obstacle_dual_double}) yields
\ben
\MM(\{u,\pz\},\{v,\yz\})%= \MM (u,v) +  \MM (\pz,\yz)
 = \wh J(v_0) - \wh I^*(\yz)=
 J(v) - \wh I^*(\yz)= \frac12\|A\nabla v-\yz\|^2_{A^{-1}}+ \Upsilon(v,\yz),
\label{MM_final_obstacle_primal_dual_double} 
\een
where
\be
\Upsilon(v,\yz)=\IntO(\alpha_+(v)_++\alpha_-(v)_- -fv +\yz\cdot \nabla (v-u_D)-\dvg\yz u_D) \dx 
%=\IntO(\alpha_+(v)_++\alpha_-(v)_- -f(v_0+u_D) +
%\yz\cdot \nabla v_0-\dvg\yz u_D)\,dx\\
=\IntO(\alpha_+(v)_+
+\alpha_-(v)_- -(f+\dvg\yz) v) \dx.
\ee
\end{proof}
\begin{corollary}
From (\ref{equality_double_main}) it follows that
\ben
\label{theorem_double_corollary}
\frac12 || \nabla (u - v) ||_A^2 + 
\frac12 ||  \pz - \yz ||_{A^{-1}}^2 \,\leq\,
\frac12\|A\nabla v-\yz\|^2_{A^{-1}}+ \Upsilon(v,\yz).
\een
This inequality has a practical value because it provides a directly computable
upper bound of the error.
\end{corollary}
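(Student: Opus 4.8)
The plan is to obtain \eqref{theorem_double_corollary} as a one-line consequence of Theorem~\ref{theorem3}: combining its three identities produces an exact balance in which the directly computable right-hand side equals the sum of the two energy errors plus two nonlinear measures, and the inequality follows simply by discarding the latter. First I would record that, exactly as in \eqref{id:primaldual}, the mixed measure decomposes additively,
\[
\MM(\{u,\pz\},\{v,\yz\}) = \MM(u,v) + \MM(\pz,\yz),
\]
which in the present setting is precisely the summation of \eqref{MM_final_obstacle_double} and \eqref{MM_final_obstacle_dual_double} performed at the end of the proof of Theorem~\ref{theorem3}, using that $J(u)=\wh I^*(\pz)$ (vanishing duality gap at the saddle point). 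Substituting the representations of $\MM(u,v)$ and $\MM(\pz,\yz)$ from \eqref{MM_final_obstacle_double}--\eqref{MM_final_obstacle_dual_double}, and that of $\MM(\{u,\pz\},\{v,\yz\})$ from \eqref{equality_double_main}, yields the identity
\[
\tfrac12\|\nabla(u-v)\|_A^2 + \mu_\omega(v) + \tfrac12\|\pz-\yz\|_{A^{-1}}^2 + \mu^*_\omega(\yz)
= \tfrac12\|A\nabla v-\yz\|^2_{A^{-1}} + \Upsilon(v,\yz).
\]

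It then remains to verify that the two measure terms on the left are nonnegative, so that they may be dropped. For $\mu_\omega(v)$ this is read off directly from \eqref{measure_primal_double}: $\mu_\omega(v)=\IntO \alpha(x)|v|\dx$ with the weight $\alpha(x)\ge 0$ by construction, since it takes only the values $0$, $\alpha_+$, $\alpha_-$, and $\alpha_++\alpha_-$. For $\mu^*_\omega(\yz)$ I would argue pointwise in \eqref{measure_dual_double}: on $\Oup$ one has $u>0$, and since $\yz\in Y^*_{\dvg,[\alpha_-,\alpha_+]}$ means $\dvg\yz+f\le\alpha_+$, the integrand $(\alpha_+-(\dvg\yz+f))\,u$ is nonnegative there; on $\Oum$ one has $u<0$, and $\dvg\yz+f\ge-\alpha_-$ gives $-\alpha_--(\dvg\yz+f)\le 0$, so $(-\alpha_--(\dvg\yz+f))\,u\ge 0$ as well. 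Hence $\mu^*_\omega(\yz)\ge 0$. Discarding $\mu_\omega(v)$ and $\mu^*_\omega(\yz)$ from the left-hand side of the displayed identity gives exactly \eqref{theorem_double_corollary}.

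There is no genuine obstacle in this argument: the entire content was already packaged into Theorem~\ref{theorem3}, and the corollary merely observes that the computable quantity $\tfrac12\|A\nabla v-\yz\|^2_{A^{-1}}+\Upsilon(v,\yz)$ overestimates the combined energy error because the exact identity it satisfies carries two extra nonnegative summands. The only point worth stating explicitly is that the sign of $\mu^*_\omega(\yz)$ relies on the admissibility restriction $\dvg\yz+f\in[-\alpha_-,\alpha_+]$ encoded in $Y^*_{\dvg,[\alpha_-,\alpha_+]}$; this is precisely why that restriction appears in the hypotheses of Theorem~\ref{theorem3}, and it is what makes the upper bound valid for every admissible $\yz$.
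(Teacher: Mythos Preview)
Your argument is correct and is exactly the intended one: the paper states the corollary as an immediate consequence of \eqref{equality_double_main} together with \eqref{MM_final_obstacle_double}--\eqref{MM_final_obstacle_dual_double}, obtained by dropping the nonnegative measures $\mu_\omega(v)$ and $\mu^*_\omega(\yz)$ from the additive decomposition of $\MM(\{u,\pz\},\{v,\yz\})$. Your explicit verification that $\mu^*_\omega(\yz)\ge 0$ via the constraint $\dvg\yz+f\in[-\alpha_-,\alpha_+]$ is a useful clarification the paper leaves implicit (there it follows from $\mu^*_\omega(\yz)=D_{\wh F}(u_0,-\Lambda^*\yz)\ge 0$).
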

\begin{remark}
It is not difficult to show that
$\Upsilon(v,\yz)=0$ if and only if the set
$\Omega^{\yz}_{<>}:=\Omega\setminus\left\{\Omega^{\yz}_-\cup \Omega^{\yz}_+\right\}$ (in this set $-\alpha_-<\dvg \yz+f<\alpha_+$) is a subset of $\Omega^v_0$,
$v$ does not have positive values in $\Omega^{\yz}_-$
and negative values in $\Omega^{\yz}_+$.
To prove this
we represent $\Upsilon(v,\yz)$ in the form
\begin{multline*}
\Upsilon(v,\yz)=\int\limits_{\Omega^{\yz}_-}
(\alpha_+(v)_+
+\alpha_-((v)_- +v) \dx
+\int\limits_{\Omega^{\yz}_{<>}}
(\alpha_+(v)_+
+\alpha_-(v)_- -(f+\dvg\yz) v) \dx
\\
+
\int\limits_{\Omega^{\yz}_+}
(\alpha_+((v)_+-v)
+\alpha_-(v)_-) \dx\\
%=\int\limits_{\Omega^{\yz}_-}
%(\alpha_++\alpha_-)(v)_+dx
%+\int\limits_{\Omega^{\yz}_0}
%(\alpha_+(v)_+
%+\alpha_-(v)_- -(f+\dvg\yz) v)\,dx\\+
%\int\limits_{\Omega^{\yz}_+}
%(\alpha_++\alpha_-)(v)_-\,dx\\
=\int\limits_{\Omega^{\yz}_-}
(\alpha_++\alpha_-)(v)_+ \dx
+\int\limits_{\Omega^{\yz}_{<>}}
(\alpha_+\chi_{\{v>0\}}-\alpha_-\chi_{\{v>0\}}-f-\dvg\yz) v \dx\\+
\int\limits_{\Omega^{\yz}_+}
(\alpha_++\alpha_-)(v)_- \dx
=\Upsilon_1(v,\yz)+\Upsilon_2(v,\yz)+\Upsilon_3(v,\yz),
\end{multline*}
where the terms are defined by the relations
\be
&&\Upsilon_1(v,\yz)=
\int\limits_{\{\Omega^{\yz}_-\cap \Omega^v_+\}\cup
\{\Omega^{\yz}_+\cap \Omega^v_-\}}
(\alpha_++\alpha_-)| v | \dx,\\
&&\Upsilon_2(v,\yz)=
\int\limits_{\Omega^{\yz}_{<>}\cap\Omega^v_+} W_+(\yz)| v| \dx,\qquad\Upsilon_3(v,\yz)=
\int\limits_{\Omega^{\yz}_{<>}\cap\Omega^v_-}
 W_-(\yz) |v| \dx,
\ee
with the weights
$W_+(\yz)=
(\alpha_+-f-\dvg\yz) $ and  $W_-(\yz)=\alpha_-+f+\dvg\yz$.
The term $\Upsilon_1(v,\yz)$ vanishes if 
$v\leq 0$ in $\Omega^{\yz}_-$ and
$v\geq 0$ in $\Omega^{\yz}_+$. In the set $\Omega^{\yz}_{<>}$ the weights
$W_+(\yz)$ and $W_-(\yz)$ are positive. Therefore, 
$\Upsilon_2(v,\yz)=\Upsilon_3(v,\yz)=0$ implies
$v=0$
almost everywhere
in
$\Omega^{\yz}_{<>}$, i.e., $\Omega^{\yz}_{<>}\subset\Omega^v_0$.  
If all the above conditions are satisfied, then $\Upsilon(v,\yz)=0$
and we arrive at the identity
\ben
\label{simplifiedidentity}
\MM(\{u,\pz\},\{v,\yz\})=\frac12\|A\nabla v-\yz\|^2_{A^{-1}}.
\een

It is clear that $\Upsilon(v,\yz)=0$ if the set $\Omega^v_-$ coincides
(up to a set of zero measure) with the set  $\Omega^{\yz}_-$ and 
$\Omega^v_+$ coincides
 $\Omega^{\yz}_+$. 
\end{remark}
\begin{remark}
Computable upper bound of the primal error measure
$\MM(u, v)$  was first derived in
\cite{ReVa2015}. It has the form
\begin{multline}
%\label{upper_bound}
\MM (u,v) \,\leq\, \MM^+(v; \beta, \lambda_+, \lambda_-, \yz) \\
%\label{majorant_obstacle}
:=\frac{1}{2}     (1+\beta) || A \nabla v - \yz ||_{\Omega,A^{-1}}^2 
+ \frac{1}{2}   (1+\frac{1}{\beta})  C_{\Omega}^2 || \dvg \yz + f - \alpha_+ \lambda_+ + \alpha_- \lambda_- ||_{\Omega}^2  \\  + \IntO \Big( \alpha_+ \big( v_+ - \lambda_+ v \big) + \alpha_- \big(v_- + \lambda_- v\big)\Big) \dx.
\end{multline}
The majorant $\MM^+$ contains contains free variables:  $\beta > 0$,  $\yz\in Y^*_{\dvg}(\Omega)$, and two nonnegative functions (Lagrange multipliers) $\lambda_+, \lambda_- \in L^2(\Omega)$ satisfying $\lambda_+(x), \lambda_-(x) \in [0,1]$ almost for all $x \in \Omega$. The constant $C_\Omega > 0$ is given by \eqref{friedrichs}.
%It is not difficult to show that for any $v$, there exist
%$\beta$, $\lambda_1$, $\lambda_2$, and  $\yz$ such that
% \eqref{upper_bound} holds as the equality.
%Indeed, set $\yz=\pz$,
In practical computations \cite{BoVa} it is convenient to simplify $\MM^+(v; \beta, \lambda_+, \lambda_-, \yz)$ to
\ben
\qquad \MM_1^+(v; \beta, \lambda, \yz) 
%\label{majorant_obstacle}
:=\frac{1}{2}     (1+\beta) || A \nabla v - \yz ||_{\Omega,A^{-1}}^2 + \frac{1}{2}   (1+\frac{1}{\beta})  C_{\Omega}^2 || \dvg \yz + f - \lambda ||_{\Omega}^2   + \IntO ( \alpha_+ v_+ + \alpha_- v_- - \lambda v) \dx,
\een
where only one multiplier $\lambda \in L^2(\Omega)$ satisfying $\lambda \in [-\alpha_-,\alpha_+]$ almost for all $x \in \Omega$ is required. 
\end{remark}
%%%%%%%%%%%%%%%%%%%%%%%%%%%%%%%%%

%%%%%%%%%%%%%%%%%%%%%%%%%%%%%%%%%
\section{Numerical verifications of the error identities}
%We verify a posteriori error identity \eqref{mainidentity} for both obstacle problems and focus on interpretation of their nonlinear measures $\mu_{\phi\psi}(\cdot)$ and $\mu_{\omega}(\cdot)$.

\subsection{The classical obstacle problem}
We consider an example from \cite{HaVa} with known exact solution. Here, 
$$\Omega=(0, 1), \qquad f=const<0, \qquad A=1, \qquad \phi=const<0$$
and $u$ satisfies the homogeneous Dirichlet boundary conditions $u(0)=0, u(1)=0.$ The exact solution $u$ is in the form 
\begin{equation}
u(x)= u_{f,\phi}(x)=\left\{
\begin{array}{lrl}
\medskip -\frac{f}{2}x^{2}-\sqrt{2 f \phi} \, x & \quad\textrm{if} & x\in [0,\frac{1}{2}-r] \\
\medskip \phi & \quad\textrm{if} & x \in (\frac{1}{2}-r,\frac{1}{2}+r) \\
\medskip -\frac{f}{2}(x-1)^{2}+\sqrt{2 f \phi} \,(x-1) & \quad\textrm{if} & x\in [\frac{1}{2}+r,1]
\end{array} \right.
\end{equation}
where $r=r_{f,\phi}:=\frac{1}{2}-\sqrt{\frac{2\phi}{f}} \in (0, \frac12)$. The parameter $r$ determines the radius of the exact lower coincidence set 
$$\Oum =(\frac{1}{2}-r, \frac{1}{2}+r).$$
%Since $u$ is a piecewise parabolic $C^1(\Omega)$ function, $\pz = u'$ is a piecewise linear $C^0(\Omega)$ function and $\dvg p$ is a piecewise constant discontinuous function. The value of $\dvg \pz(x) + f$ is zero values outside $\Oum$ and negative and equal to $f$ in $\Oum$. It implies $\Omega ^{\pz}_\om  = \Oum.$ 
It is easy to show that the exact energy reads 
$$J(u)=f  \phi (\frac{4}{3} \sqrt{\frac{2\phi}{f}}-1).$$

An approximation $v_{\epsilon_1}$ is considered in the form 
of $u$ corresponding to the same value of $\phi$ and a perturbed value $f$,
$$ v_{\epsilon_1}(x):= u_{f_{\epsilon_1},\phi}(x), \qquad f_{\epsilon_1}:=\frac{2 \phi}{(\frac12 - r + \epsilon_1)^2} $$
for some small perturbation $\epsilon_1$. This choice ensures 
$$\Omega ^{v_{\epsilon_1}}_\om  =(\frac12-r+\epsilon_1, \frac{1}{2}+r-\epsilon_1) \qquad \mbox{for }
\epsilon_1 \in (r - \frac12, r)
$$
and in particular, $ \Omega ^{v_{\epsilon_1}}_\om \subset \Oum $ for $\epsilon_1 \in (0, r)$. An example of $u$ and $v_{\epsilon_1}$ is depicted in the top left picture of Figure \ref{figure_classical_obstacle}. An approximation $\yz_{\epsilon_2}$ is taken as 
$$ \yz_{\epsilon_2}(x) = I(\pz)(x), \qquad x \in \Omega,$$
where $I$ denotes a piecewise linear nodal and continuous interpolation operator at nodes
$$\{0, \frac12 - r-\epsilon_2, \frac12 - r + \epsilon_2, \frac12 + r -\epsilon_2, \frac12 + r + \epsilon_2, 1 \}$$ for some small positive perturbation $\epsilon_2$. The approximation $\yz_{\epsilon_2}$ differs from the exact flux $\pz$ only locally in $(\frac12 - r - \epsilon_2, \frac12 - r + \epsilon_2) \cup (\frac12 + r - \epsilon_2, \frac12 + r + \epsilon_2) $ and
$$\Oum \subset \Omega ^{\yz_{\epsilon_2}}_\om  = (\frac12 - r-\epsilon_2, \frac12 + r + \epsilon_2) \quad \mbox{for } \epsilon_2 \in (0, r).$$
An example of $\pz$ and $\yz_{\epsilon_2}$ is shown in the top right picture of Figure \ref{figure_classical_obstacle} and corresponding equilibrium terms $\dvg \pz + f$ and $\dvg \yz_{\epsilon_2}+ f$ in the bottom left picture. \\

For numerical verifications, we choose parameters 
$$\phi=-1, \quad f=-14$$ 
resulting in $r \approx 0.1220$, $J(u)\approx -6.9446$
and approximations $v_{\epsilon_1}, \yz_{\epsilon_2}$ corresponding to 
$$\epsilon_1, \epsilon_2 \in \{1/10, 1/20, 1/40, 1/80, 1/160 \}.$$
We first verify the primal error identity
$$ \frac{1}{2}\|\nabla(u-v_{\epsilon_1})\|^{2} + \mu_{\phi\psi}(v_{\epsilon_1}) = \MM(u, v_{\epsilon_1}) = J(v_{\epsilon_1})-J(u) $$ for all approximations $v_{\epsilon_1}$.
Table \ref{table_classical_obstacle_1D_primal} confirms that the primal error identity holds and both quadratic (gradient containing) and nonlinear parts of the primal error converge. For smaller values of $\epsilon_1$ the quadratic part dominates over the nonlinear part. This is due to the fact that the quadratic part of error is globally distributed over $\Omega$ and the nonlinear part $\mu_{\phi\psi}(v_{\epsilon_1})$ has a support in 
\be
\Oum \setminus \Omega ^{v_{\epsilon_1}}_\om \approx (0.3779, 0.3779+ \epsilon_1) \, \cup \, (0.6220- \epsilon_1, 0.6220).
\ee
Table \ref{table_classical_obstacle_1D_dual} verifies the dual error identity
$$ \frac{1}{2}\|\pz - \yz_{\epsilon_2}\|^{2} + \mu_{\phi\psi}^{*}(\yz_{\epsilon_2}) = \MM(\pz,\yz_{\epsilon_2}) = I^*(\pz) - I^*(\yz_{\epsilon_2}) $$ 
for all approximations $\yz_{\epsilon_2}$. Again, both quadratic and nonlinear parts converge. None of error parts dominates, since $\yz_{\epsilon_2}$ and $\pz_{\epsilon_2}$ differ only locally. The nonlinear part $\mu_{\phi\psi}^{*}(\yz_{\epsilon_2})$ has a support in 
\be
\Omega ^{\yz_{\epsilon_2}}_\om \setminus \Oum \approx (0.3779 - \epsilon_2, 0.3779) \, \cup \, (0.6220, 0.6220 + \epsilon_2).
\ee
An example of primal and dual nonlinear error functions is depicted in the bottom left picture of of Figure \ref{figure_classical_obstacle}. \\

Table \ref{table_classical_obstacle_1D_majorant} verifies the majorant identity
$$\MM(\{u,\pz\},\{v_{\epsilon_1},\yz_{\epsilon_2}\})=\frac12 ||A \nabla v_{\epsilon_1} - \yz_{\epsilon_2} ||^2 + \Upsilon(v_{\epsilon_1},\yz_{\epsilon_2}), $$
where the computable nonlinear majorant part  $\Upsilon$  is given by \eqref{majorant_nonlinear_classical}. The majorant identity is valid for all considered approximations. 

\begin{remark}
Since the upper obstacle $\psi$ is not considered in this example, 
$$ \dvg \yz + f \leq 0 \qquad \mbox{in } \Omega$$
has to be satisfied. This condition is fulfilled for $\yz_{\epsilon_2}$ constructed above. 
\end{remark}

\subsection{The double obstacle problem}
We consider an example with known exact solution. Here,  
$$\Omega=(-1, 1), \qquad f=0, \qquad A=1, \qquad \alphap,\alpham >2$$
and $u$ satisfies Dirichlet boundary conditions $u(-1)=-1, u(1)=1.$ This example generalizes example of \cite{Bo}, in which $\alphap = \alpham = 8$.  It is possible to show the exact solution is given by a formula
$$ u(x)=u_{\alpham,\alphap}(x)=\left\{\begin{array}{ll} -(\frac{\alpham}{2})\,x^2 + (\sqrt{2\alpham}-\alpham)\,x + \sqrt{2\alpham}-\frac{\alpham}{2}-1 ,  & x \in \left<-1, r_\om \right>, \\
                                 0,  & x \in ( r_\om, r_\op ), \\
                                 (\frac{\alphap}{2})\,x^2 + (\sqrt{2\alphap}-\alphap)\,x - \sqrt{2\alphap}+\frac{\alphap}{2}+1,  & x \in \left<r_\op, 1 \right>,
                                    \end{array} \right.          $$
where $r_\om: = \sqrt{\frac{2}{\alpham}}-1 \in (-1,0)$ and $r_\op := 1 - \sqrt{\frac{2}{\alphap}} \in (0,1)$ determine exact coincidence sets 
$$ \Oum=(-1, r_\om), \qquad \Oun=\left< r_\om, r_\op \right>, \qquad \Oup=(r_\op, 1).$$
The exact energy then reads
$$ J(u)=\frac{2 \sqrt{2}}{3} (\sqrt{\alphap} + \sqrt{\alpham}). $$ 

An approximation $v_{\epsilon_1}$ is considered in the form of $u$ corresponding to perturbed values $\alphap$, $\alpham$, 
$$ v_{\epsilon_1}(x):= u_{\alpha_{\om_{\epsilon_1}},\alpha_{\op_{\epsilon_1}}}, \qquad 
\alpha_{\opm_{\epsilon_1}}:=\frac{2}{(1 \mp r_\opm + \epsilon_1)^2} 
 $$
for some small perturbation $\epsilon_1$. This choice ensures 
$$\Omega ^{v_{\epsilon_1}}_0  =(r_\om +\epsilon_1, r_\op - \epsilon_1) \qquad \mbox{for } 
\epsilon_1 \in (-1 + r_\op, r_\op) \cap (-1 - r_\om, -r_\om) $$
and in particular, $ \Omega ^{v_{\epsilon_1}}_0 \subset \Oun $ for $\epsilon_1 \in (0, \min \{r_\op, -r_\om \})$. An example of $u$ and $v_{\epsilon_1}$ is depicted in the top left picture of Figure \ref{figure_double_obstacle}. An approximation $\yz_{\epsilon_2}$ is taken as 
$$ \yz_{\epsilon_2}(x) = I(\pz)(x), \qquad x \in \Omega,$$
where $I$ denotes a piecewise linear nodal and continuous interpolation operator at nodes
$$\{-1, r_\om -\epsilon_2, r_\om +\epsilon_2, r_\op -\epsilon_2, r_\op + \epsilon_2, 1 \}$$ for some small positive perturbation $\epsilon_2$. The approximation $\yz_{\epsilon_2}$ differs from the exact flux $\pz$ only locally in $(r_\om  - \epsilon_2, r_\om  + \epsilon_2) \cup (r_\op  - \epsilon_2, r_\op  + \epsilon_2) $ and 
$$\Oum \subset \Omega ^{\yz_{\epsilon_2}}_\om  = (r_\om -\epsilon_2, r_\op  + \epsilon_2) \quad \mbox{for } \epsilon_2 \in (0, \min \{ -r_\om, r_\op   \}).$$
An example of $\pz$ and $\yz_{\epsilon_2}$ is shown in the top right picture of Figure \ref{figure_double_obstacle} and corresponding equilibrium terms in the bottom left picture. \\

For numerical verifications, we choose parameters (identical to example of \cite{Bo})
$$\alpham=\alphap=8 $$ 
resulting in 
$$ u(x)=\left\{\begin{array}{ll} -4x^2-4x-1 , \quad & x \in \left<-1, -0.5 \right>, \\
                                 0, \quad & x \in (-0.5, 0.5 ), \\
                                 4x^2-4x+1, \quad & x \in \left<0.5, 1 \right>
                                    \end{array} \right. $$
and $J(u)=5\frac{1}{3}$ and approximations $v_{\epsilon_1}, \yz_{\epsilon_2}$ corresponding to 
$$\epsilon_1, \epsilon_2 \in \{1/5, 1/10, 1/20, 1/40, 1/80 \}.$$
We again verify the primal error identity
$$ \frac{1}{2}\|\nabla(u-v_{\epsilon_1})\|^{2} + \mu_{\omega}(v_{\epsilon_1}) = \MM(u, v_{\epsilon_1}) = J(v_{\epsilon_1})-J(u) $$ for all approximations $v_{\epsilon_1}$.
Table \ref{table_double_obstacle_1D_primal} confirms that the primal error identity holds and both quadratic (gradient containing) and nonlinear parts of the primal error converge. For smaller values of $\epsilon_1$ the quadratic part dominates over the nonlinear part. This is due to the fact that the quadratic part of error is globally distributed over $\Omega$ and the nonlinear part $\mu_{\phi\psi}(v_{\epsilon_1})$ has a support in 
\be
\Oum \setminus \Omega ^{v_{\epsilon_1}}_\om \approx (-0.5, -0.5+ \epsilon_1) \, \cup \, (0.5-\epsilon_1, 0.5).
\ee
Table \ref{table_double_obstacle_1D_dual} verifies the dual error identity
$$ \frac{1}{2}\|\pz - \yz_{\epsilon_2}\|^{2} + \mu_{\omega}^{*}(\yz_{\epsilon_2}) = \MM(\pz,\yz_{\epsilon_2}) = I^*(\pz) - I^*(\yz_{\epsilon_2}) $$ 
for all approximations $\yz_{\epsilon_2}$. Again, both quadratic and nonlinear parts converge. None of error parts dominates, since $\yz_{\epsilon_2}$ and $\pz_{\epsilon_2}$ differ only locally. The nonlinear part $\mu_{\phi\psi}^{*}(\yz_{\epsilon_2})$ has a support in 
\be
\Omega ^{\yz_{\epsilon_2}}_\om \setminus \Oum \approx (0.5 - \epsilon_2, 0.5) \, \cup \, (0.5, 0.5 + \epsilon_2).
\ee
An example of primal and dual nonlinear error functions is depicted in the bottom left picture of of Figure \ref{figure_double_obstacle}. \\

Table \ref{table_double_obstacle_1D_majorant} verifies the majorant identity
$$\MM(\{u,\pz\},\{v_{\epsilon_1},\yz_{\epsilon_2}\})=\frac12 ||A \nabla v_{\epsilon_1} - \yz_{\epsilon_2} ||^2 + \Upsilon(v_{\epsilon_1},\yz_{\epsilon_2}), $$
where the computable nonlinear majorant part  $\Upsilon$  is given by \eqref{majorant_nonlinear_double}. The majorant identity is valid for all considered approximations. 

\begin{table}
\begin{center}
\begin{small}
\begin{tabular}{|l|c|c||c|c||c|}
\hline
$\epsilon_1$   & $\frac{1}{2}\|\nabla(u-v_{\epsilon_1})\|^{2}$ & $\mu_{\phi\psi}(v_{\epsilon_1}) $ & $\MM(u, v_{\epsilon_1})$ & $J(v_{\epsilon_1})-J(u)$  & $\mu_{\phi\psi}(v_{\epsilon_1})$ [\%] \\
\hline
0.1000  & 1.54e-01 & 4.09e-02 & 1.95e-01 & 1.95e-01 & 20.92 \\ 
0.0500  & 4.82e-02 & 6.37e-03 & 5.45e-02 & 5.45e-02 & 11.68 \\ 
0.0250  & 1.36e-02 & 8.98e-04 & 1.45e-02 & 1.45e-02 & 6.20 \\ 
0.0125  & 3.62e-03 & 1.20e-04 & 3.73e-03 & 3.73e-03 & 3.20 \\ 
0.0063  & 9.33e-04 & 1.54e-05 & 9.49e-04 & 9.49e-04 & 1.63 \\ 
\hline
\end{tabular}
\caption{Terms in the primal error identity computed for various approximation $v_{\epsilon_1}$ in case of the classical obstacle in 1D. The rightmost column shows the contribution of $\mu_{\phi\psi}(v_{\epsilon_1})$ to the majorant value $\MM(u, v_{\epsilon_1})$. }
\label{table_classical_obstacle_1D_primal}
%\end{small}
%\end{center}
%\end{table}
%\begin{table}
%\begin{center}
%\begin{small}
\begin{tabular}{|l|c|c||c|c||c|}
\hline
$\epsilon_2$   & $\frac{1}{2}\|\pz - \yz_{\epsilon_2}\|^{2}$ & $\mu_{\phi\psi}^{*}(\yz_{\epsilon_2}) $ & $\MM(\pz,\yz_{\epsilon_2})$ & $ I^*(\pz) - I^*(\yz_{\epsilon_2}) $  & $\mu_{\phi\psi}^{*}(\yz_{\epsilon_2})$ [\%] \\
\hline
0.0500  & 4.08e-03 & 4.08e-03 & 8.17e-03 & 8.17e-03 & 50.00 \\ 
0.0250  & 5.10e-04 & 5.10e-04 & 1.02e-03 & 1.02e-03 & 50.00 \\ 
0.0125  & 6.38e-05 & 6.38e-05 & 1.28e-04 & 1.28e-04 & 50.00 \\ 
0.0063  & 7.98e-06 & 7.98e-06 & 1.60e-05 & 1.60e-05 & 50.00 \\ 
0.0031  & 9.97e-07 & 9.97e-07 & 1.99e-06 & 1.99e-06 & 50.00 \\ 
\hline
\end{tabular}
\caption{Terms in the dual error identity computed for various approximation $\yz_{\epsilon_2}$  in case of the classical obstacle in 1D.  The rightmost column shows the contribution of $\mu_{\phi\psi}^{*}(\yz_{\epsilon_2}) $ to the majorant value $\MM(\pz,\yz_{\epsilon_2})$.}
\label{table_classical_obstacle_1D_dual}
%\end{small}
%\end{center}
%\end{table}
%\begin{table}
%\begin{center}
%\begin{small}
\begin{tabular}{|l|l|c|c||c|c||}
\hline
$\epsilon_1$   & $\epsilon_2$   &  $\frac12 ||\nabla v_{\epsilon_1} - \yz_{\epsilon_2} ||^2$  &  $\Upsilon(v_{\epsilon_1},\yz_{\epsilon_2})$    &sum & $\MM(\{u,\pz\},\{v_{\epsilon_1},\yz_{\epsilon_2}\})$ \\
\hline
0.1000  & 0.1000 & 9.72e-02 & 1.63e-01 & 2.61e-01 & 2.61e-01 \\ 
0.1000  & 0.0500 & 1.32e-01 & 7.15e-02 & 2.03e-01 & 2.03e-01 \\ 
0.0500  & 0.0500 & 3.72e-02 & 2.55e-02 & 6.27e-02 & 6.27e-02 \\ 
0.0500  & 0.0250 & 4.44e-02 & 1.11e-02 & 5.55e-02 & 5.55e-02 \\ 
0.0250  & 0.0250 & 1.19e-02 & 3.59e-03 & 1.55e-02 & 1.55e-02 \\ 
0.0250  & 0.0125 & 1.30e-02 & 1.57e-03 & 1.46e-02 & 1.46e-02 \\ 
0.0125  & 0.0125 & 3.38e-03 & 4.78e-04 & 3.86e-03 & 3.86e-03 \\ 
0.0125  & 0.0063 & 3.54e-03 & 2.09e-04 & 3.75e-03 & 3.75e-03 \\ 
0.0063  & 0.0063 & 9.03e-04 & 6.17e-05 & 9.65e-04 & 9.65e-04 \\ 
0.0063  & 0.0031 & 9.24e-04 & 2.70e-05 & 9.51e-04 & 9.51e-04 \\ 
\hline
\end{tabular}
\caption{Terms in the majorant error identity computed for various approximation $v_{\epsilon_1}$ and $\yz_{\epsilon_2}$ in case of the classical obstacle in 1D. The computable majorant  $\frac12 ||\nabla v_{\epsilon_1} - \yz_{\epsilon_2} ||^2 + \Upsilon(v_{\epsilon_1},\yz_{\epsilon_2})$   is identical to $\MM(\{u,\pz\},\{v_{\epsilon_1},\yz_{\epsilon_2}\})$, which can only be computed with the knowledge of the exact solution $u$ and the exact flux $\pz$. 
}
\label{table_classical_obstacle_1D_majorant}
\end{small}
\end{center}
\end{table}

 % + \Upsilon_1(v_{\epsilon_1},\yz_{\epsilon_2}),

\begin{figure}
 \centering
 \includegraphics[width=0.8\textwidth]{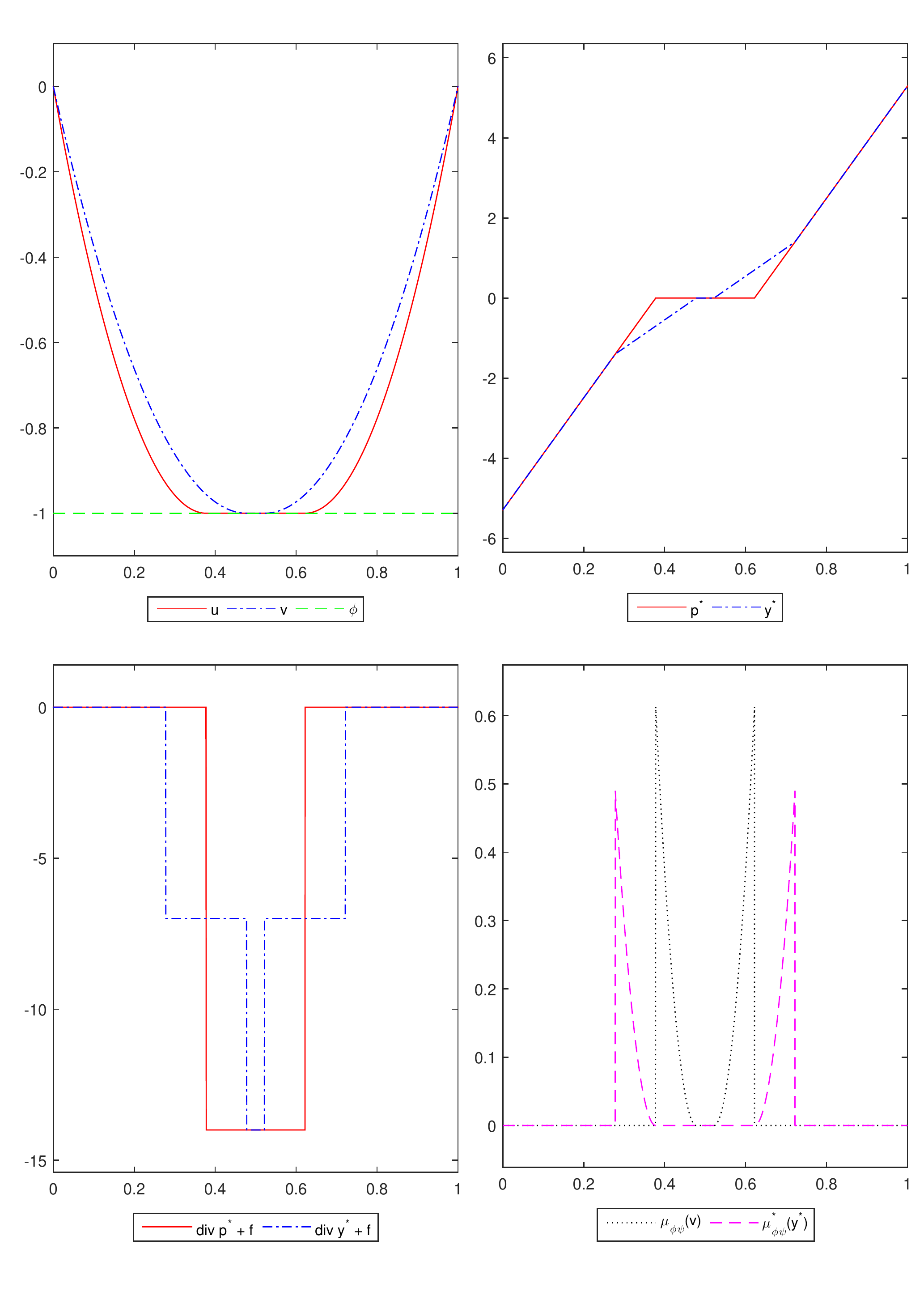}
%\vspace{-1.5cm}
 \caption{Example of the classical obstacle problem with $\phi=-1$ and $f=-14$ and approximations $v_{\epsilon_1}$ and $\yz_{\epsilon_2}$ generated by perturbations $\epsilon_1=\epsilon_2=0.1.$
The exact solution $u$ and its approximation $v_{\epsilon_1}$ 
are displayed in the top left picture, the exact flux $\pz=u'$ and its approximation $\yz_{\epsilon_2}$ in the top right picture. Both nonpositive functions $\dvg \, \pz + f$ and $\dvg \, \yz_{\epsilon_2} + f$ are displayed in the bottom left picture.   
Since $\Ovm \subset \Oum$ and $\subset \Oum \subset \Omega^{\yz}_\om$ , there are positive contributions of $\mu_{\phi\psi}(v_{\epsilon_1}) $ and $\mu_{\phi\psi}^{*}(\yz_{\epsilon_2})$ shown in the bottom right picture.}
\label{figure_classical_obstacle}
\end{figure}

\begin{table}
\begin{center}
\begin{small}
\begin{tabular}{|l|c|c||c|c||c|}
\hline
$\epsilon_1$   & $\frac{1}{2}\|\nabla(u-v_{\epsilon_1})\|^{2}$ & $\mu_{\omega}(v_{\epsilon_1}) $ & $\MM(u, v_{\epsilon_1})$ & $J(v_{\epsilon_1})-J(u)$  & $\mu_{\omega}(v_{\epsilon_1})$ [\%] \\
\hline
0.2000  & 2.18e-01 & 8.71e-02 & 3.05e-01 & 3.05e-01 & 28.57 \\ 
0.1000  & 7.41e-02 & 1.48e-02 & 8.89e-02 & 8.89e-02 & 16.67 \\ 
0.0500  & 2.20e-02 & 2.20e-03 & 2.42e-02 & 2.42e-02 & 9.09 \\ 
0.0250  & 6.05e-03 & 3.02e-04 & 6.35e-03 & 6.35e-03 & 4.76 \\ 
0.0125  & 1.59e-03 & 3.97e-05 & 1.63e-03 & 1.63e-03 & 2.44 \\ 
\hline
\end{tabular}
\caption{Terms in the primal error identity computed for various approximation $v_{\epsilon_1}$ in case of the double obstacle in 1D. The rightmost column shows the contribution of $\mu_{\phi\psi}(v_{\epsilon_1})$ to the majorant value $\MM(u, v_{\epsilon_1})$. }
\label{table_double_obstacle_1D_primal}
%\end{small}
%\end{center}
%\end{table}
%\begin{table}
%\begin{center}
%\begin{small}
\begin{tabular}{|l|c|c||c|c||c|}
\hline
$\epsilon_2$   & $\frac{1}{2}\|\pz - \yz_{\epsilon_2}\|^{2}$ & $\mu_{\omega}^{*}(\yz_{\epsilon_2}) $ & $\MM(\pz,\yz_{\epsilon_2})$ & $ I^*(\pz) - I^*(\yz_{\epsilon_2}) $  & $\mu_{\omega}^{*}(\yz_{\epsilon_2})$ [\%] \\
\hline
0.2000  & 8.53e-02 & 8.53e-02 & 1.71e-01 & 1.71e-01 & 50.00 \\ 
0.1000  & 1.07e-02 & 1.07e-02 & 2.13e-02 & 2.13e-02 & 50.00 \\ 
0.0500  & 1.33e-03 & 1.33e-03 & 2.67e-03 & 2.67e-03 & 50.00 \\ 
0.0250  & 1.67e-04 & 1.67e-04 & 3.33e-04 & 3.33e-04 & 50.00 \\ 
0.0125  & 2.08e-05 & 2.08e-05 & 4.17e-05 & 4.17e-05 & 50.00 \\ 
\hline
\end{tabular}
\caption{Terms in the dual error identity computed for various approximation $\yz_{\epsilon_2}$  in case of the double obstacle in 1D.  The rightmost column shows the contribution of $\mu_{\phi\psi}^{*}(\yz_{\epsilon_2}) $ to the majorant value $\MM(\pz,\yz_{\epsilon_2})$.}
\label{table_double_obstacle_1D_dual}
%\end{small}
%\end{center}
%\end{table}
%\begin{table}
%\begin{center}
%\begin{small}
\begin{tabular}{|l|l|c|c||c|c||}
\hline
$\epsilon_1$   & $\epsilon_2$   &  $\frac12 ||\nabla v_{\epsilon_1} - \yz_{\epsilon_2} ||^2$  &  $\Upsilon(v_{\epsilon_1},\yz_{\epsilon_2})$    &sum & $\MM(\{u,\pz\},\{v_{\epsilon_1},\yz_{\epsilon_2}\})$ \\
\hline
0.2000  & 0.2000 & 1.27e-01 & 3.48e-01 & 4.75e-01 & 4.75e-01 \\ 
0.1000  & 0.2000 & 5.96e-02 & 2.00e-01 & 2.60e-01 & 2.60e-01 \\ 
0.1000  & 0.1000 & 5.10e-02 & 5.93e-02 & 1.10e-01 & 1.10e-01 \\ 
0.0500  & 0.1000 & 1.58e-02 & 2.98e-02 & 4.56e-02 & 4.56e-02 \\ 
0.0500  & 0.0500 & 1.81e-02 & 8.82e-03 & 2.69e-02 & 2.69e-02 \\ 
0.0250  & 0.0500 & 4.93e-03 & 4.08e-03 & 9.02e-03 & 9.02e-03 \\ 
0.0250  & 0.0250 & 5.47e-03 & 1.21e-03 & 6.68e-03 & 6.68e-03 \\ 
0.0125  & 0.0250 & 1.42e-03 & 5.35e-04 & 1.96e-03 & 1.96e-03 \\ 
0.0125  & 0.0125 & 1.51e-03 & 1.59e-04 & 1.67e-03 & 1.67e-03 \\ 
0.0063  & 0.0125 & 3.85e-04 & 6.86e-05 & 4.53e-04 & 4.53e-04 \\ 
\hline
\end{tabular}
\caption{Terms in the majorant error identity computed for various approximation $v_{\epsilon_1}$ and $\yz_{\epsilon_2}$ in case of the double obstacle in 1D. The computable majorant  
$\frac12 ||\nabla v_{\epsilon_1} - \yz_{\epsilon_2} ||^2 + \Upsilon(v_{\epsilon_1},\yz_{\epsilon_2})$  is identical to $\MM(\{u,\pz\},\{v_{\epsilon_1},\yz_{\epsilon_2}\})$, which can only be computed with the knowledge of the exact solution $u$ and the exact flux $\pz$. 
}
\label{table_double_obstacle_1D_majorant}
\end{small}
\end{center}
\end{table}

\begin{figure}
 \centering
 \includegraphics[width=0.8\textwidth]{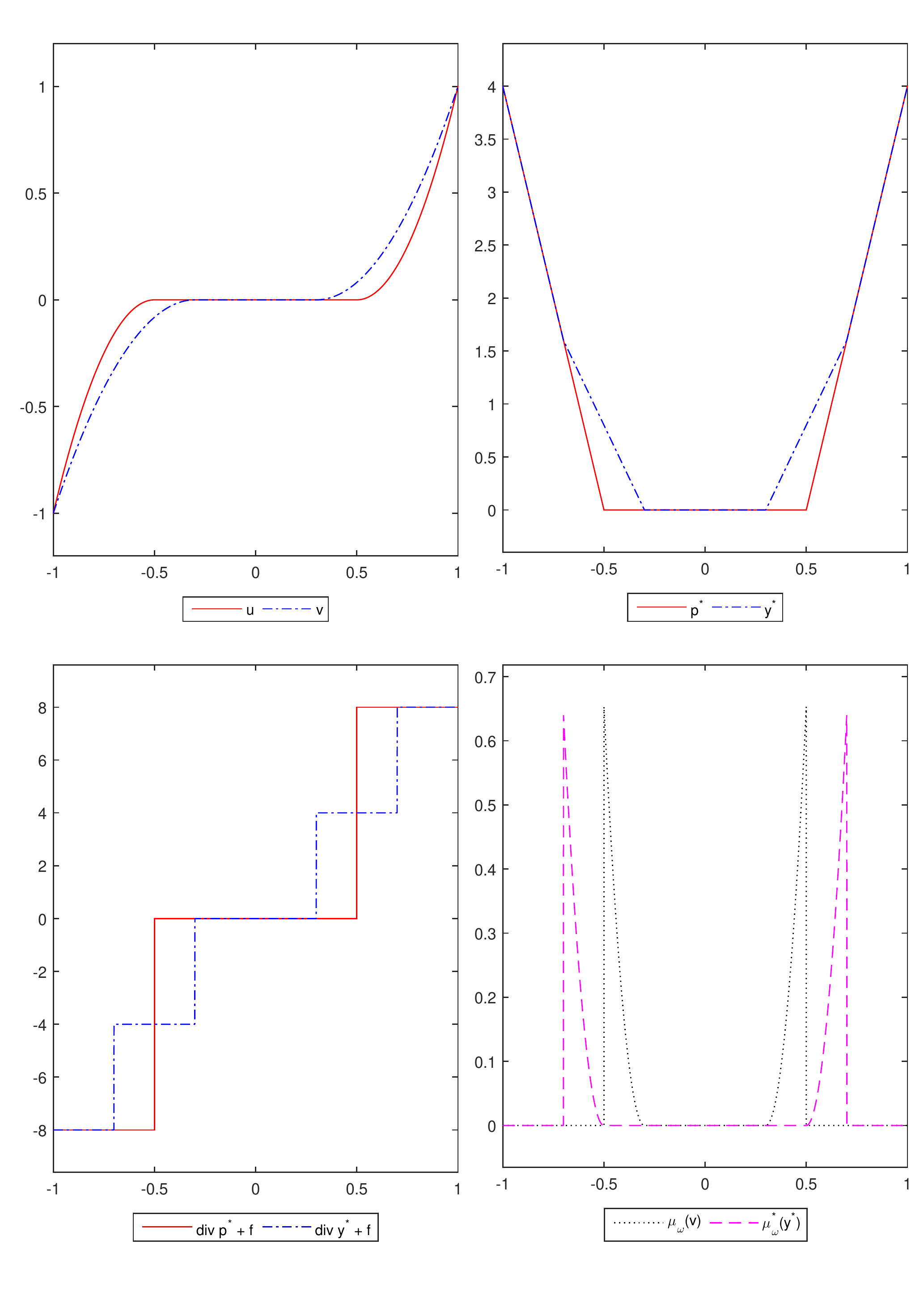}
%\vspace{-1.5cm}
 \caption{Example of the double obstacle problem with $\alphap=\alpham=8$ and approximations $v_{\epsilon_1}$ and $\yz_{\epsilon_2}$ generated by perturbations $\epsilon_1=\epsilon_2=0.2.$
The exact solution $u$ and its approximation $v_{\epsilon_1}$ 
are displayed in the top left picture, the exact flux $\pz=u'$ and its approximation $\yz_{\epsilon_2}$ in the top right picture. Both nonpositive functions $\dvg \, \pz + f$ and $\dvg \, \yz_{\epsilon_2} + f$ are displayed in the bottom left picture.   
Since $\Ovm \subset \Oum$ and $\subset \Oum \subset \Omega^{\yz}_\om$ , there are positive contributions of $\mu_{\omega}(v_{\epsilon_1}) $ and $\mu_{\omega}^{*}(\yz_{\epsilon_2})$ shown in the bottom right picture.)
\label{figure_double_obstacle}
 }
\end{figure}

\section*{Acknowledgments}
The first author acknowledges the support of the Johann Radon Institute for Computational and Applied Mathematics (RICAM) in Linz, Austria during 
Special Semester on Computational Methods in Science and Engineering in 2016. The second author has been supported by GA CR through the projects
GF16-34894L and 17-04301S. 

\bibliographystyle{siamplain}

%\bibliography{references}

\end{document}